\title{Graded comodule categories with enough projectives.}
\author{A. Salch}
\begin{document}

\begin{abstract}
It is well-known that the category of comodules over a flat Hopf algebroid is abelian but typically fails to have enough projectives, and more generally, the category of graded comodules over a graded flat Hopf algebroid is abelian but typically fails to have enough projectives. %, even under a finite-type hypothesis on the Hopf algebroid.
In this short paper we prove that the category of connective graded comodules over a connective, graded, flat, finite-type Hopf algebroid has enough projectives. Applications to algebraic topology are given: the Hopf algebroids of stable co-operations in complex bordism, Brown-Peterson homology, and classical mod $p$ homology all have the property that their categories of connective graded comodules have enough projectives. We also prove that categories of connective graded comodules over appropriate Hopf algebras fail to be equivalent to categories of graded connective modules over a ring.
\end{abstract}

\maketitle
\tableofcontents

\section{Introduction.}

Let $(A,\Gamma)$ be a graded Hopf algebroid (that is, a cogroupoid object in the category of graded-commutative rings) such that $\Gamma$ is flat over $A$. 
Then the category of graded $\Gamma$-comodules is abelian, and homological algebra in this category is of central importance in algebraic topology, since the input for generalized Adams spectral sequences is a (relative) $\Ext$ functor in a category of graded $\Gamma$-comodules; see chapters~2 and~3 of~\cite{MR860042} for a textbook account of this material. Appendix~1 of~\cite{MR860042} is the standard reference for Hopf algebroids and homological algebra in their comodule categories.

Some homological constructions in comodule categories are made problematic, however, by the lack of enough projectives. It is well-known that the category of comodules over a Hopf algebroid typically fails to have enough projectives; even when $A$ is a field and $\Gamma$ a Hopf algebra over $A$, the category of $\Gamma$-comodules has enough projectives if and only if $\Gamma$ is {\em semiperfect}, i.e., every simple comodule has an injective hull which is finite-dimensional as an $A$-vector space. (This result is attributed by B.~I. Lin, in~\cite{MR0498663}, to unpublished work of Larson, Sweedler, and Sullivan; the generalization of this result which replaces Hopf algebras with coalgebras is a result of Lin's, from the same paper.)

Here is an example: in the paper~\cite{MR2066503} (see the Remark preceding Proposition~1.2.3), M. Hovey shows that the category of comodules over the Hopf algebra $\mathbb{Q}[x]$, with $x$ primitive, has the property that infinite products fail to be exact. That is, Grothendieck's axiom AB$4^*$ fails in this category of comodules. It is standard that a complete abelian category which has enough projectives also satisfies axiom AB$4^*$ (see e.g. Lemma~A.3.15 of~\cite{MR1812507}), so this category of comodules cannot have enough projectives. Hovey's example also works in the graded case (although, crucially, not the {\em connective} graded case, if $x$ is in a positive grading degree).

The purpose of this short paper is to prove that, under some reasonable assumptions (which are satisfied in cases of topological interest), appropriate categories of graded comodules over graded Hopf algebroids {\em do} have enough projectives.
The essential point is to work with {\em connective} graded comodules, that is, graded comodules which are trivial in all negative grading degrees; the category of connective graded comodules over the Hopf algebra $\mathbb{Q}[x]$ of Hovey's example {\em does} have enough projectives, and much more generally, our main result is Theorem~\ref{main thm}:
\begin{unnumberedtheorem}
Let $(A,\Gamma)$ be a connective finite-type flat graded Hopf algebroid.
Then the category of connective graded $\Gamma$-comodules
is a Grothendieck category with a projective generator. %\footnote{It is well-known (see e.g. Corollary~V.1 of~\cite{MR0232821}) that an abelian category $\mathcal{C}$ is equivalent to a category of modules over a ring if and only if $\mathcal{C}$ is cocomplete and has a {\em compact} projective generator. The category of connective graded $\Gamma$-comodules is co-complete, and our Theorem~\ref{main thm} shows that it has a projective generator, but the projective generator we produce in the proof of Theorem~\ref{main thm} is easily seen to be non-compact. It seems unlikely that the category of connective graded $\Gamma$-comodules has an alternative projective generator which is compact, except for rather degenerate examples of $(A,\Gamma)$.}. 
Consequently, 
the category of connective graded $\Gamma$-comodules
has enough projectives and enough injectives, and satisfies Grothendieck's axiom
AB$4^*$ (that is, infinite products exist and are exact).
\end{unnumberedtheorem}
However, if $A$ is not the zero ring, then this category of connective graded $\Gamma$-comodules fails to have a {\em compact} projective generator, so it is {\em not} equivalent to the category of (ungraded) modules over any ring.\footnote{As a peculiar but elementary special case, which must certainly already be well-known: if $A = \Gamma$ with trivial grading, the category of connective graded $A$-modules---which is, of course, isomorphic to a countable infinite product of copies of the category $\Mod(A)$---is not equivalent to the category of modules over a ring.} This is proven in Proposition \ref{gr-comod isnt a module category}. 

Of course it is then natural to ask whether the category of $\Gamma$-comodules might be equivalent to the category of {\em connective graded} modules over a {\em connective graded} ring. This takes a bit more work: in Theorem \ref{main nonequivalence thm} we show, under some reasonable hypotheses on $\Gamma$, that the category of graded connective $\Gamma$-comodules cannot be equivalent by a suspension-preserving equivalence to the category of connective graded modules over a ring.

An amusing consequence is Corollary \ref{steenrod nonequivalence cor}: the category of connective graded comodules over the mod $p$ dual Steenrod algebra is not equivalent, via a suspension-preserving functor, to the category of connective graded modules over a ring. Nevertheless, that comodule category {\em does} have enough projectives, by Theorem~\ref{main thm}. 

Some terminology used above may not be immediately familiar. The relevant definitions are as follows: 
\begin{itemize}
\item a graded Hopf algebroid $(A,\Gamma)$ is {\em flat} if $\Gamma$ is flat over $A$,
\item {\em connective} if $A$ and $\Gamma$ are concentrated in nonnegative grading degrees (i.e., $(A,\Gamma)$ is $\mathbb{N}$-graded, not just $\mathbb{Z}$-graded),
\item and {\em finite-type} if there exists an exact sequence of graded $A$-modules
\[ \coprod_{i\in\mathbb{Z}} \Sigma^{i} A^{\oplus b_i}  \rightarrow \coprod_{i\in\mathbb{Z}} \Sigma^{i} A^{\oplus a_i} \rightarrow \Gamma \rightarrow 0 \]  for some sequences of integers $(\dots ,a_{-1},a_0,a_1, a_2,\dots)$ and $(\dots ,b_{-1},b_0,b_1, b_2,\dots)$. (Of course, if $\Gamma$ is also connective, then $a_i$ and $b_i$ each must vanish for sufficiently small $i$.)
\end{itemize}
Following the usual convention in topology, we write $\Sigma$ for the suspension operator, i.e., $\Sigma A$ is $A$ with all grading degrees increased by one.

Special cases of Theorem~\ref{main thm} include some of the most important Hopf algebroids for topological applications, as we see in Corollary~\ref{main cor}:
\begin{unnumberedcorollary}
The categories of connective graded comodules over the Hopf algebroids\linebreak
$(MU_*,MU_*MU)$, $(BP_*,BP_*BP)$, and $((H\mathbb{F}_p)_*, (H\mathbb{F}_p)_*H\mathbb{F}_p)$ all have enough projectives.
\end{unnumberedcorollary}
These Hopf algebroids are very well-known in algebraic topology: 
$(MU_*,MU_*MU)$ is the Hopf algebroid of stable natural co-operations of the complex bordism functor $MU_*$, $(BP_*,BP_*BP)$ is the Hopf algebroid of stable natural co-operations of the $p$-local Brown-Peterson homology functor $BP_*$, and 
$((H\mathbb{F}_p)_*, (H\mathbb{F}_p)_*H\mathbb{F}_p)$ is the mod $p$ dual Steenrod algebra, i.e., the Hopf algebra of stable natural co-operations of the mod $p$ classical homology functor $(H\mathbb{F}_p)_*$. These are the Hopf algebroids whose comodule categories have the most important homological invariants: appropriate relative $\Ext$ groups over these three Hopf algebroids recover the $E_2$-terms of the global Adams-Novikov, $p$-local Adams-Novikov, and classical $p$-primary Adams spectral sequences, respectively.
See chapters 2, 3, and 4 of~\cite{MR860042} for this material.

I am grateful to G. Valenzuela for useful conversations relating to this material, and to A. Baker and an anonymous referee for their patience with how long I took to make revisions on this paper.

\section{When does tensor product of modules commute with infinite products?}

\begin{convention}%\begin{itemize}
In this paper, all gradings will be assumed to be $\mathbb{Z}$-gradings. When a graded object is trivial in all negative grading degrees, we will say that the object is {\em connective}. We write $\mathbb{N}$ for the set of nonnegative integers.%\end{itemize}
\end{convention}

\begin{definition}\label{def of finite-type}
Let $A$ be a graded ring. 
\begin{itemize}
\item
We will say that a graded $A$-module $M$ is {\em finite-type and free} if $M$ is a free $A$-module with finitely many generators in each degree. That is, $M$ is finite-type and free if and only if there exists a function $c: \mathbb{Z} \rightarrow \mathbb{N}$
and an isomorphism of graded $A$-modules 
 \[ \coprod_{n\in \mathbb{Z}} (\Sigma^n A)^{\oplus c(n)} \stackrel{\cong}{\longrightarrow} M.\]
\item
We will say that a graded $A$-module $M$ has {\em finite-type generators} if $M$ admits a set of homogeneous generators, with finitely many in each degree. That is, $M$ has finite-type generators if and only if there exists a short exact sequence of graded $A$-modules
\begin{equation}\label{exact seq 5} F_1 \rightarrow F_0 \rightarrow M \rightarrow 0\end{equation}
with $F_0$ finite-type and free.  
\item 
We will say that $M$ is {\em finite-type} if $M$ admits a presentation given by homogeneous generators, finitely many in each degree, and homogeneous relations, finitely many in each degree. That is $M$ is finite-type if and only if there exists an exact sequence of graded $A$-modules as in~\eqref{exact seq 5}, with $F_0,F_1$ both finite-type and free.
\end{itemize}
\end{definition}

Lemmas~\ref{finite type generators lemma} and~\ref{finite type lemma} are generalizations, to the graded setting, of two useful lemmas found in T.~Y.~Lam's book~\cite{MR1653294}. The ungraded versions of these lemmas appear as Propositions~2.4.43 and~2.4.44 in Lam's book. We provide proofs of Lemmas~\ref{finite type generators lemma} and~\ref{finite type lemma} for the sake of being self-contained, but there is nothing novel here: the proofs are essentially the same as in the ungraded case.
I am grateful to G. Valenzuela for suggesting Lam's book to me as a reference for the ungraded results. 
\begin{lemma}\label{finite type generators lemma}
Let $A$ be an connective graded ring and let $\Gamma$ be a connective graded left $A$-module. 
The following conditions are equivalent:
\begin{itemize}
\item For every set $\{ M_i\}_{i\in I}$ of connective graded left $A$-modules,
the canonical graded $A$-module map
\begin{equation}\label{comparison map 10} \Gamma\otimes_A \prod_{i\in I} M_i \rightarrow \prod_{i\in I} (\Gamma\otimes_A M_i) \end{equation}
is surjective.
\item For every set $I$, the canonical graded $A$-module map
\begin{equation}\label{comparison map 11} \Gamma\otimes_A \prod_{i\in I} A \rightarrow \prod_{i\in I} \Gamma \end{equation}
is surjective.
\item As a graded $A$-module, $\Gamma$ has finite-type generators.
\end{itemize}
\end{lemma}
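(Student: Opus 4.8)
The plan is to prove the cycle of implications $(1)\Rightarrow(2)\Rightarrow(3)\Rightarrow(1)$, where $(1),(2),(3)$ name the three bulleted conditions in the order listed. The implication $(1)\Rightarrow(2)$ needs no work: it is the special case of the constant family $M_i=A$. The implication $(3)\Rightarrow(1)$ is a routine resolution argument, so the real content is $(2)\Rightarrow(3)$, which is where connectivity enters.

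For $(3)\Rightarrow(1)$, suppose $\Gamma$ has finite-type generators. First I would note that, since $\Gamma$ is connective, one may choose a surjection $F_0\rightarrow\Gamma$ with $F_0=\coprod_{n}(\Sigma^n A)^{\oplus c(n)}$ finite-type, free, \emph{and connective}, i.e.\ with $c(n)=0$ for $n<0$: any free summand $\Sigma^n A$ with $n<0$ has its generator in degree $n$, which maps into $\Gamma_n=0$, so that summand maps to $0$ and may be deleted. Now for any connective family $\{M_i\}_{i\in I}$ one has a commutative square with exact rows
\[ \xymatrix{ F_0\otimes_A\prod_i M_i \ar[r]\ar[d]_{\theta_{F_0}} & \Gamma\otimes_A\prod_i M_i \ar[r]\ar[d]^{\theta_{\Gamma}} & 0 \\ \prod_i\bigl(F_0\otimes_A M_i\bigr) \ar[r] & \prod_i\bigl(\Gamma\otimes_A M_i\bigr) \ar[r] & 0 ,} \]
in which $\theta_{(-)}$ is the canonical comparison map, $\theta_{\Gamma}$ is the map~\eqref{comparison map 10}, the top row is exact by right exactness of $\otimes_A$, and the bottom row is exact because a product of surjections of modules is a surjection. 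A short diagram chase then deduces surjectivity of $\theta_{\Gamma}$ from surjectivity of $\theta_{F_0}$, so it suffices to see that $\theta_{F_0}$ is an isomorphism when $F_0$ is connective, finite-type, and free. This I would check one grading degree at a time: in degree $e$ only the summands $\Sigma^n A$ with $0\le n\le e$ contribute to either side (because $F_0$ is connective and each $M_i$ is connective, so $\Sigma^n M_i$ is concentrated in degrees $\ge n$), and since $c(n)<\infty$ the degree-$e$ part of each side is a \emph{finite} direct sum of copies of the groups $(M_i)_{e-n}$ and of their products over $i\in I$; as finite direct sums commute with products, the two sides coincide.

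The main step, $(2)\Rightarrow(3)$, I would organize through the graded indecomposables. Set $Q_d:=\Gamma_d/\bigl(\sum_{j>0}A_j\,\Gamma_{d-j}\bigr)$, an $A_0$-module. The claim is that $(2)$ forces every $Q_d$ to be finitely generated over $A_0$; granting this, if for each $d$ one picks a finite set $B_d\subseteq\Gamma_d$ of homogeneous elements whose images generate $Q_d$ over $A_0$, then a graded Nakayama argument (an induction on grading degree, using connectivity of $A$ and $\Gamma$ essentially) shows that $\bigcup_d B_d$ generates $\Gamma$ over $A$; since this set is homogeneous with only finitely many elements in each degree, it gives a finite-type free cover, which is $(3)$. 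To prove the claim, fix $d$, take $I:=\Gamma_d$ (the underlying set of the degree-$d$ component), and feed condition $(2)$ the ``tautological'' element $u\in\bigl(\prod_{x\in\Gamma_d}\Gamma\bigr)_d=\prod_{x\in\Gamma_d}\Gamma_d$ defined by $u_x=x$. Surjectivity of~\eqref{comparison map 11} in degree $d$ yields a homogeneous preimage $v=\sum_{j=1}^{k}\gamma_j\otimes\alpha_j$ of degree $d$, with each $\gamma_j\in\Gamma$ homogeneous of some degree $e_j$ with $0\le e_j\le d$ and each $\alpha_j\in\prod_{x}A_{d-e_j}$; reading off coordinate $x$ gives $x=\sum_{j}\gamma_j\,\alpha_j(x)$ in $\Gamma_d$ for every $x$. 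Reducing modulo $\sum_{j>0}A_j\Gamma_{d-j}$ annihilates every term with $e_j<d$, so in $Q_d$ we get $\bar x=\sum_{j\,:\,e_j=d}\alpha_j(x)\,\bar\gamma_j$ with all coefficients $\alpha_j(x)$ in $A_0$; as $x$ ranges over $\Gamma_d$ the elements $\bar x$ exhaust $Q_d$, so the finite set $\{\bar\gamma_j:e_j=d\}$ generates $Q_d$ over $A_0$, proving the claim.

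The hard part is precisely the bookkeeping in $(2)\Rightarrow(3)$. Trying to read off generators of $\Gamma_d$ itself from the identity $x=\sum_j\gamma_j\alpha_j(x)$ does not work: the $\gamma_j$ occurring there live in all degrees $\le d$, and, summing over $d$, they accumulate in the low degrees, so one never controls the number of generators in a fixed degree. Passing to the indecomposables $Q_d$ is what discards this accumulation (only the degree-$d$ contributions survive), and graded Nakayama then rebuilds an honest finite-type free cover. Everything else is formal: right exactness of the tensor product, exactness of products of modules, and the elementary interaction between connectivity and the finiteness of the multiplicities $c(n)$.
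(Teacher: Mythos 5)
Your proof is correct, and its skeleton---the cycle $(1)\Rightarrow(2)\Rightarrow(3)\Rightarrow(1)$, the tautological element of $\prod_{x\in\Gamma_d}\Gamma$ as the engine of $(2)\Rightarrow(3)$, and a degreewise finiteness argument for $(3)\Rightarrow(1)$---is the same as the paper's. The genuine divergence is in how $(2)\Rightarrow(3)$ extracts a finite-type generating set from the homogeneous elements $\gamma_j$ produced by the tautological element. The paper takes the union over all degrees $d$ of its finitely many elements $c_{j,d}$ and asserts outright that this union has only finitely many members in each grading degree; as you correctly point out, that assertion is not literally justified, since the elements attached to degree $d$ may sit in any degree $\leq d$ and so can accumulate in a fixed low degree as $d$ varies. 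Your detour through the indecomposables $Q_d=\Gamma_d/\bigl(\sum_{j>0}A_j\Gamma_{d-j}\bigr)$ together with graded Nakayama (induction on degree, using connectivity of $A$ and $\Gamma$) discards exactly these lower-degree contributions and produces an honest finite-type free cover, so your treatment of this step is the more careful one; an equivalent, lighter fix closer to the paper's phrasing is to retain only those $\gamma_j$ of degree exactly $d$ and prove by induction on degree that they generate, the lower-degree terms being absorbed by the inductive hypothesis. Your $(3)\Rightarrow(1)$ differs only cosmetically: the paper filters $\Gamma$ by the submodules $\Gamma_{\leq n}$ generated by the generators of degree $\leq n$ and compares comparison maps in degrees $\leq n$, whereas you pass to a connective finite-type free cover $F_0$ and verify the comparison map degree by degree; both rest on the same three ingredients (right exactness of $\otimes_A$, exactness of products of graded modules, and connectivity forcing only the finitely many free summands $\Sigma^n A$ with $0\leq n\leq e$ to contribute in degree $e$).
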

\begin{proof}
\begin{itemize}
\item If the first condition is satisfied, then letting $M_i = A$ for all $i\in I$, we immediately get that the second condition is satisfied.
\item Suppose that the second condition is satisfied. 
Choose an integer $n$, and let
$I$ be the set of homogeneous elements of $\Gamma$ of grading degree exactly $n$. We will write $\prod_{i\in I} \Gamma\{ e_i\}$ for the product
$\prod_{i\in I} \Gamma$, using $e_i$ as formal symbols to index the factors in the product. Let $x_n\in \prod_{i\in I} \Gamma\{ e_i\}$ be the element $x_n = \sum_{i\in I} i \cdot e_{i}.$
Since the map~\eqref{comparison map 11} is grading-preserving and surjective, there exists some
element \[ \sum_{j=1}^{m_n} \left( c_{j,n} \otimes \sum_{i\in I} a_{i,j,n}e_{i}\right) \in \Gamma\otimes_A \prod_{i\in I} A\{ e_{i}\}\]
which is sent by the map~\eqref{comparison map 11} to $x_n$,
in which each $c_{j,n}$ is a homogeneous element of $\Gamma$ and in which
each $a_{i,j,n}$ is a homogeneous element of $A$.
The grading degrees of these elements satisfy
$\left| c_{j,n}\right| + \left| a_{i,j,n} \right| = n$,
and consequently $\left| c_{j,n}\right| \leq n$.

Consequently we have the formula
\[ \sum_{j=1}^{m_n}\sum_{i\in I} c_{j,n} a_{i,j,n}e_{i} = \sum_{i\in I} i \cdot e_{i},\]
and consequently $\sum_{j=1}^{m_n}  c_{j,n}a_{i, j, n} = i$. 
Consequently the set of elements $S = \{ c_{j,n}: n\in \mathbb{Z}, 1\leq j\leq m_n\}$ is
a set of homogeneous $A$-module generators for $\Gamma$. 
Let $S_n$ be the set $\{ c_{j,n}: 1\leq j\leq m_n\}\subseteq \Gamma$, so that
$S = \bigcup_{n\in\mathbb{Z}} S_n$. 
Then each $S_n$ is finite, and, given an element of $S$ in grading degree $N$,
that element must be contained in $S_n$ for some $n\leq N$, of which there are only finitely many, since $\Gamma$ is connective.
So, for each integer $N$, there are only
finitely many elements of $S$ of grading degree $\leq N$.
Hence there are only finitely many
elements of $S$ in each grading degree. Hence $\Gamma$ has finite-type generators. 
\item 
Now suppose that $\Gamma$ has finite-type generators, and that 
$\{ M_i\}_{i\in I}$ is a set of graded left $A$-modules.
We need to show that map~\eqref{comparison map 10} is surjective.

%It is useful to first prove an easier special case: suppose that each $M_i$ is a suspension of the free $A$-module on one generator, i.e., suppose that there exists some function $N: I \rightarrow \mathbb{Z}$ and, for each $i\in I$, an isomorphism of graded $A$-modules $\Sigma^{N(i)} A \stackrel{\cong}{\longrightarrow} M_i$.
Choose a set of homogeneous $A$-module generators
$\{ c_{j}\}_{j\in J}$
for $\Gamma$, with at most finitely many $c_j$ in each grading degree.
Let $D: J \rightarrow \mathbb{Z}$ be the function that sends
$j$ to the grading degree of $c_j$.
For each integer $n$, let $\Gamma_{\leq n}$ be the graded sub-$A$-module
of $\Gamma$ generated by all the elements $c_j$ such that $D(j)\leq n$.
Since $A$ is connective and all $M_i$ are connective, the natural map $\Gamma_{\leq n}\hookrightarrow \Gamma$ of graded $A$-modules is bijective in grading degrees $\leq n$.

Write $J_n$ for the set of elements $j\in J$ such that $D(j)\leq n$.
Now we have an exact sequence of $A$-modules
\[ \coprod_{j\in J_n} \Sigma^{D(j)}A\{ e_j\} \stackrel{s}{\longrightarrow} \Gamma_{\leq n} \rightarrow 0\]
where $s(e_j) = c_j$; here the elements $e_j$ are formal symbols
indexing the coproduct summands.
The map $s$ now fits into the commutative square of graded $A$-modules
\begin{equation}\label{diagram 4} \xymatrix{
 \left( \coprod_{j\in J_n} \Sigma^{D(j)}A\{ e_j\}\right) \otimes_A\prod_{i\in I}M_i  \ar[r]^>>>>>{s\otimes \id} \ar[d] & 
  \Gamma_{\leq n}\otimes_A \prod_{i\in I}M_i \ar[d]  \\
 \prod_{i\in I}\left(  \left( \coprod_{j\in J_n} \Sigma^{D(j)}A\{ e_j\}\right) \otimes_A M_i \right) \ar[r]_>>>>>{\prod s\otimes \id} &
  \prod_{i\in I}\left( \Gamma_{\leq n} \otimes_A M_i \right)
}\end{equation}
where the vertical maps are the canonical comparison maps, as in map~\eqref{comparison map 10}. The map $\prod s\otimes \id$ is a surjection, 
since each $s\otimes \id$ is a surjection and since
infinite direct products are exact in the category of graded $A$-modules.
The left-hand vertical map in diagram~\eqref{diagram 4} 
is an isomorphism, since $J_n$ is finite. Hence the right-hand vertical map in diagram~\eqref{diagram 4} is also surjective. 
The square of graded $A$-modules
\begin{equation}\label{diagram 5} \xymatrix{
 \Gamma_{\leq n}\otimes_A \prod_{i\in I}M_i \ar[d] \ar[r] &
  \Gamma \otimes_A \prod_{i\in I}M_i \ar[d] \\
 \prod_{i\in I}\left( \Gamma_{\leq n} \otimes_A M_i \right) \ar[r] &
  \prod_{i\in I}\left( \Gamma \otimes_A M_i \right)
}\end{equation}
commutes, and the horizontal maps are isomorphisms in grading degrees $\leq n$, so surjectivity of the right-hand vertical map in diagram~\eqref{diagram 4}, i.e., the left-hand vertical map in diagram~\eqref{diagram 5}, tells us that the right-hand vertical map in diagram~\eqref{diagram 5}, i.e., the map~\eqref{comparison map 10}, is surjective in grading degree $n$.
But this holds for all integers $n$; so the map~\eqref{comparison map 10} is surjective.
\end{itemize}
\end{proof}

\begin{lemma}\label{finite type lemma}
Let $A$ be an connective graded ring and let $\Gamma$ be a connective graded left $A$-module. 
The following conditions are equivalent:
\begin{itemize}
\item For every set $\{ M_i\}_{i\in I}$ of connective graded left $A$-modules,
the canonical graded $A$-module map
\begin{equation}\label{comparison map 10a} \Gamma\otimes_A \prod_{i\in I} M_i \rightarrow \prod_{i\in I} (\Gamma\otimes_A M_i) \end{equation}
is an isomorphism.
\item For every set $I$, the canonical graded $A$-module map
\begin{equation}\label{comparison map 11a} \Gamma\otimes_A \prod_{i\in I} A \rightarrow \prod_{i\in I} \Gamma \end{equation}
is an isomorphism.
\item As a graded $A$-module, $\Gamma$ is finite-type.
\end{itemize}
\end{lemma}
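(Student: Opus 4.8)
The plan is to rerun the proof of Lemma~\ref{finite type generators lemma}, promoting ``surjective'' to ``bijective'' at each stage, and to use Schanuel's lemma to arrange that all free modules in sight are connective. The implication ``map~\ref{comparison map 10a} is an isomorphism $\Rightarrow$ map~\ref{comparison map 11a} is an isomorphism'' is immediate upon taking $M_i=A$ for all $i$. The technical core is the observation $(\ast)$: for a \emph{connective} finite-type free graded $A$-module $F$ and any family $\{M_i\}_{i\in I}$ of connective graded left $A$-modules, the canonical map $F\otimes_A\prod_{i\in I}M_i\rightarrow\prod_{i\in I}(F\otimes_A M_i)$ is an isomorphism. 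I would verify $(\ast)$ one grading degree at a time: writing $F\cong\coprod_{n\geq 0}(\Sigma^n A)^{\oplus c(n)}$ with each $c(n)$ finite, in degree $d$ only the summands with $0\leq n\leq d$ contribute to either side, because $A$, each $\Sigma^n A$, and each $M_i$ are connective; this is a \emph{finite} direct sum of terms of the form $\prod_{i\in I}(M_i)_{d-n}$, and finite direct sums commute with arbitrary products. Connectivity of $F$ is essential here: for $F=\coprod_{n\in\mathbb{Z}}\Sigma^n A$ the comparison map is not even surjective when $A$ is a field. I also record the routine remarks that a connective graded $A$-module with finite-type generators (in the sense of Definition~\ref{def of finite-type}) admits a surjection from a connective finite-type free module, and that the kernel of any such surjection is again connective.

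For ``map~\ref{comparison map 11a} is an isomorphism $\Rightarrow$ $\Gamma$ is finite-type'': since map~\ref{comparison map 11a} is in particular surjective, Lemma~\ref{finite type generators lemma} tells us $\Gamma$ has finite-type generators, so I may choose a connective finite-type free $F_0$ and a surjection $p\colon F_0\rightarrow\Gamma$; put $K=\ker p$, which is connective. Applying $-\otimes_A\prod_{i\in I}A$ and $\prod_{i\in I}(-)$ to $0\rightarrow K\rightarrow F_0\rightarrow\Gamma\rightarrow 0$ gives a commutative diagram with right-exact top row, exact bottom row (products are exact in graded $A$-modules), middle vertical map an isomorphism by $(\ast)$, and right-hand vertical map an isomorphism by hypothesis; a short diagram chase then forces the left-hand vertical map $K\otimes_A\prod_{i\in I}A\rightarrow\prod_{i\in I}K$ to be surjective, and this for every index set $I$. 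By Lemma~\ref{finite type generators lemma} applied to $K$, the module $K$ has finite-type generators, hence, being connective, admits a surjection from a connective finite-type free $F_1$; then the sequence $F_1\rightarrow F_0\rightarrow\Gamma\rightarrow 0$ exhibits $\Gamma$ as finite-type.

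For ``$\Gamma$ is finite-type $\Rightarrow$ map~\ref{comparison map 10a} is an isomorphism'': from any finite-type free presentation $G_1\rightarrow G_0\rightarrow\Gamma\rightarrow 0$ the kernel $\ker(G_0\rightarrow\Gamma)$, being a quotient of the finite-type free $G_1$, has finite-type generators; choosing also a connective finite-type free cover $F_0$ of $\Gamma$ with kernel $K$, Schanuel's lemma gives $K\oplus G_0\cong\ker(G_0\rightarrow\Gamma)\oplus F_0$, so $K$---a direct summand, hence a quotient, of a module with finite-type generators---has finite-type generators, and being connective it receives a surjection from a connective finite-type free $F_1$. Thus $\Gamma$ has a presentation $F_1\rightarrow F_0\rightarrow\Gamma\rightarrow 0$ by connective finite-type free modules; applying $-\otimes_A\prod_{i\in I}M_i$ and $\prod_{i\in I}(-)$ yields a commutative diagram with right-exact rows whose two leftmost vertical maps are isomorphisms by $(\ast)$, and since a right-exact functor is determined by a presentation, the induced map on cokernels, namely map~\ref{comparison map 10a}, is an isomorphism. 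The main obstacle I anticipate is exactly the bookkeeping caused by the fact that a finite-type free module need not be connective: the clean degreewise proof of $(\ast)$, and with it the whole argument, breaks for non-connective free modules, so one must take care to produce \emph{connective} finite-type free presentations throughout, which is precisely where Schanuel's lemma is needed.
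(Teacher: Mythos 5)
Your proof is correct and follows essentially the same strategy as the paper's: reduce to the previous lemma for the surjectivity statements, prove the comparison map is an isomorphism for finite-type free modules by a degree-by-degree finiteness argument (your direct computation replaces the paper's filtration by the submodules $\Gamma_{\leq n}$, but it is the same idea), and then propagate to general finite-type $\Gamma$ via a right-exact presentation and a diagram chase. The one genuine difference is your explicit connectivity bookkeeping: the paper's proof quietly applies its finite-type-free case to the modules $F_0, F_1$ of an arbitrary finite-type presentation, even though that case was proved using connectivity of the free module (and, as your example $\coprod_{n\in\mathbb{Z}}\Sigma^n A$ shows, genuinely needs it), whereas you use Schanuel's lemma, plus the observation that negative-degree generators can be discarded when mapping onto a connective module, to manufacture a presentation by \emph{connective} finite-type free modules before running the argument. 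This makes your write-up slightly longer but tighter than the paper's at exactly the point the paper leaves implicit; otherwise the two proofs coincide.
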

\begin{proof}
\begin{itemize}
\item If the first condition is satisfied, then letting $M_i = A$ for all $i\in I$, we immediately get that the second condition is satisfied.
\item Suppose that the second condition is satisfied. 
We will write $\prod_{i\in I} \Gamma\{ e_i\}$ for the product
$\prod_{i\in I} \Gamma$, using $e_i$ as formal symbols to index the factors in the product.

By Lemma~\ref{finite type generators lemma}, we know that $\Gamma$ has finite-type generators. 
Choose an exact sequence of graded $A$-modules
\begin{equation}\label{exact seq 6} 0 \rightarrow K \rightarrow F_0 \rightarrow \Gamma \rightarrow 0\end{equation}
with $F_0$ finite-type and free.  We can arrange maps as in~\eqref{comparison map 11a} into a commutative diagram with exact rows
\[ \xymatrix{
  & (\prod_{i\in I} A) \otimes_A K \ar[r]\ar[d] & (\prod_{i\in I} A) \otimes_A F_0 \ar[r]\ar[d] & (\prod_{i\in I} A) \otimes_A \Gamma \ar[r]\ar[d] & 0\ar[d] \\
 0 \ar[r] & \prod_{i\in I} K \ar[r] & \prod_{i\in I} F_0 \ar[r] & \prod_{i\in I} \Gamma \ar[r] & 0
}\]
in which the vertical map 
$(\prod_{i\in I} A) \otimes_A \Gamma\rightarrow \prod_{i\in I}\Gamma$ is an isomorphism by assumption, and the vertical map 
$(\prod_{i\in I} A) \otimes_A F_0 \rightarrow \prod_{i\in I} F_0$ is surjective by
Lemma~\ref{finite type generators lemma}. An easy diagram chase
shows that the vertical map 
$(\prod_{i\in I} A) \otimes_A K \rightarrow \prod_{i\in I} K$ is then 
also surjective. By Lemma~\ref{finite type generators lemma}, $K$ then has finite-type generators, hence we can choose a finite-type and free graded $A$-module $F_1$ and a
surjective graded $A$-module map $F_1\rightarrow K$, and consequently
\[ F_1 \rightarrow F_0 \rightarrow \Gamma \rightarrow 0\]
is an exact sequence of graded $A$-modules with $F_1,F_0$ finite-type and free.
So $\Gamma$ is finite-type.
\item
Now suppose that $\Gamma$ is finite-type.
First, suppose that $\Gamma$ is finite-type and free.
Choose a set of homogeneous $A$-module generators $S$ for $\Gamma$ with 
at most finitely many elements of $S$ in each grading degree, and 
then let $\Gamma_{\leq n}$ be the graded sub-$A$-module of $\Gamma$
generated by the elements of $S$ of degree $\leq n$.
%It is then an easy exercise to prove that the canonical map
%\begin{equation*} %\label{comparison map 10d}  
% \Gamma_{\leq n}\otimes_A \prod_{i\in I} M_i \rightarrow \prod_{i\in I} \left( \Gamma \otimes_A M_i\right) \end{equation*}
%;then the map~\eqref{comparison map 10d} is an isomorphism by products commuting with products.
Since $A$ and $\Gamma$ and all $M_i$ are connective, 
the horizontal maps in the commutative square
\begin{equation}\label{comm sq 14} \xymatrix{
 \Gamma_{\leq n}\otimes_A \prod_{i\in I} M_i  \ar[r] \ar[d]^{} & \Gamma\otimes_A \prod_{i\in I} M_i  \ar[d] \\
 \prod_{i\in I}\left(\Gamma_{\leq n}\otimes_A  M_i\right)  \ar[r] & \prod_{i\in I}\left(\Gamma\otimes_A  M_i \right)
}\end{equation}
are isomorphisms in grading degrees $\leq n$,
and the left-hand vertical map
is an isomorphism in grading degrees $\leq n$, since $\Gamma_{\leq n}$ is a direct sum of finitely many
copies of $A$ (up to suspension), and finite direct sums coincide with
finite products in module categories, including graded module categories.
Consequently the right-hand vertical map in square~\eqref{comm sq 14} is also
an isomorphism in grading degrees $\leq n$.
Since this is true for all $n$, the canonical map~\eqref{comparison map 10a} is an isomorphism when $\Gamma$ is finite-type and free.

Now lift the assumption that $\Gamma$ is finite-type and free, and assume it is only finite-type.
Choose an exact sequence
of graded $A$-modules
\[ F_1 \rightarrow F_0 \rightarrow \Gamma \rightarrow 0\]
with $F_1,F_0$ finite-type and free.
We can fit maps as in~\eqref{comparison map 10a}
into the commutative diagram of graded $A$-modules with exact rows
\[\xymatrix{
  F_1 \otimes_A \prod_{i\in I} M_i \ar[r]\ar[d] &
  F_0 \otimes_A \prod_{i\in I} M_i \ar[r]\ar[d] &
  \Gamma \otimes_A \prod_{i\in I} M_i \ar[r] \ar[d] &
  0 \ar[d] \\
   \prod_{i\in I} F_1\otimes_A M_i \ar[r] & 
  \prod_{i\in I} F_0\otimes_A M_i \ar[r] & 
  \prod_{i\in I} \Gamma \otimes_A M_i \ar[r] & 
  0 
}\]
and the two left-hand vertical maps are both isomorphisms, by what we have already proven under the finite-type-and-free assumption; hence the map
$\Gamma \otimes_A \prod_{i\in I} M_i \rightarrow \prod_{i\in I} \Gamma \otimes_A M_i $
is an isomorphism.
\end{itemize}
\end{proof}

\section{Graded comodules.}

\begin{definition}
Let $(A,\Gamma)$ be a graded Hopf algebroid.
We will say that a graded $\Gamma$-comodule $M$ is
{\em finite-type} if $M$ is finite-type as an $A$-module, as in Definition~\ref{def of finite-type}. 
We will say that the graded Hopf algebroid $(A,\Gamma)$ is itself
{\em finite-type} if $\Gamma$ is finite-type as an $A$-module.

Similarly, we will say that a comodule is {\em connective} if it is connective as an graded $A$-module. We will say that the Hopf algebroid $(A,\Gamma)$ is {\em connective} if $A$ and $\Gamma$ are both connective as graded $A$-modules.
\end{definition}

\begin{example}
The graded Hopf algebroid $(MU_*,MU_*MU)$ satisfies $MU_* \cong \mathbb{Z}[x_1, x_2, \dots ]$ and $MU_*MU\cong MU_*[b_1, b_2, \dots ]$, with $\left| x_n\right| = \left| b_n\right| = 2n$, so $(MU_*,MU_*MU)$ is flat, connective, and finite-type.
Similarly, $BP_* \cong \mathbb{Z}_{(p)}[v_1, v_2, \dots ]$ and $BP_*BP \cong BP_*[t_1, t_2, \dots ]$ with $\left| v_n\right| = \left| t_n\right| = 2(p^n-1)$ for a given prime number $p$ (the choice of $p$ is suppressed from the notation for $BP$), so $(BP_*,BP_*BP)$ is flat, connective, and finite-type.
Finally, $(H\mathbb{F}_p)_* \cong \mathbb{F}_p$, and $(H\mathbb{F}_p)_*H\mathbb{F}_p \cong \mathbb{F}_2[\xi_1, \xi_2, \dots ]$ if $p=2$, with $\left| \xi_n\right| = 2^n-1$; and $(H\mathbb{F}_p)_*H\mathbb{F}_p \cong \mathbb{F}_p[\xi_1, \xi_2, \dots ]\otimes_{\mathbb{F}_p} \Lambda(\tau_0,\tau_1, \dots)$ if $p>2$, with $\left| \xi_n\right| = 2(p^n-1)$ and $\left| \tau_n\right| = 2p^n-1$, so again, $((H\mathbb{F}_p)_*, (H\mathbb{F}_p)_*H\mathbb{F}_p)$ is flat, connective, and finite-type.
See chapters~3 and ~4 of~\cite{MR860042} for this material (which is well-known in homotopy theory).

For another class of examples: $(k, A)$ is flat, connective, and finite-type for any commutative graded connected finite-type Hopf algebra $A$ over a field $k$ (as studied in~\cite{MR0174052}).
\end{example}

\begin{lemma}\label{products lemma}
Let $(A,\Gamma)$ be a connective finite-type flat graded Hopf algebroid. 
Let $\{ M_i\}_{i\in I}$ be a set of connective graded $\Gamma$-comodules. 
Then the natural map of graded $A$-modules
\begin{equation}\label{comparison map} \prod^{\Gamma}_{i\in I} M_i \rightarrow \prod_{i\in I} M_i,\end{equation}
from the underlying graded $A$-module of the product of the $M_i$ computed in the category of connective graded $\Gamma$-comodules to the product of the $M_i$ computed in the category of graded $A$-modules, is an isomorphism.
\end{lemma}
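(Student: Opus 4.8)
The plan is to sidestep any general existence result for products in the comodule category and instead construct $\prod^{\Gamma}_{i\in I}M_i$ by hand, by putting a $\Gamma$-comodule structure directly on the underlying product $\prod_{i\in I}M_i$ of graded $A$-modules. The input that makes this work is precisely the finite-type hypothesis: since $\Gamma$ is finite-type as a graded $A$-module and each $M_i$ is a connective graded $A$-module, Lemma~\ref{finite type lemma} tells us that the canonical comparison map
\[ c\colon \Gamma\otimes_A \prod_{i\in I}M_i \longrightarrow \prod_{i\in I}\bigl(\Gamma\otimes_A M_i\bigr) \]
is an isomorphism. Writing $\psi_i\colon M_i\to\Gamma\otimes_A M_i$ for the coaction of $M_i$, I would then define a coaction
\[ \psi := c^{-1}\circ\Bigl(\textstyle\prod_{i\in I}\psi_i\Bigr)\colon \prod_{i\in I}M_i \longrightarrow \Gamma\otimes_A\prod_{i\in I}M_i, \]
which is visibly a map of connective graded $A$-modules.

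Next I would check that $\psi$ satisfies the counit and coassociativity axioms of a $\Gamma$-comodule. The counit axiom follows by composing with $c$ and using its naturality with respect to the natural transformation $\epsilon\otimes_A(-)\colon\Gamma\otimes_A(-)\Rightarrow A\otimes_A(-)=\mathrm{id}$ (whose comparison map is the identity), which reduces the claim to the counit axiom for each $\psi_i$. For coassociativity, the key point is that each $\Gamma\otimes_A M_i$ is again connective, because $\Gamma$ and $M_i$ are, so Lemma~\ref{finite type lemma} applies a second time: the comparison map $\Gamma\otimes_A\prod_i(\Gamma\otimes_A M_i)\to\prod_i(\Gamma\otimes_A\Gamma\otimes_A M_i)$, and hence the induced comparison map $\Gamma\otimes_A\Gamma\otimes_A\prod_i M_i\to\prod_i(\Gamma\otimes_A\Gamma\otimes_A M_i)$, is an isomorphism. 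Chasing the definition of $\psi$ through the relevant naturality squares for the comparison maps then exhibits both $(\Delta_\Gamma\otimes 1)\circ\psi$ and $(1_\Gamma\otimes\psi)\circ\psi$ as $\bigl(\prod_i(1_\Gamma\otimes\psi_i)\bigr)\circ\prod_i\psi_i$ after composing with that isomorphism, using the coassociativity of each $\psi_i$; since the isomorphism is in particular injective, coassociativity of $\psi$ follows.

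Finally I would verify that $(\prod_{i\in I}M_i,\psi)$, equipped with the projections, is the product in the category of connective graded $\Gamma$-comodules: given a connective graded $\Gamma$-comodule $N$ with comodule maps $f_i\colon N\to M_i$, the unique $A$-linear map $f\colon N\to\prod_i M_i$ with $\mathrm{pr}_i\circ f=f_i$ is automatically a comodule map, since $c$ is an isomorphism (hence injective) and both $\psi\circ f$ and $(1_\Gamma\otimes f)\circ\psi_N$ have components $\psi_i\circ f_i$ after composing with $c$, by the characterizing property $\mathrm{pr}_i\circ c=1_\Gamma\otimes\mathrm{pr}_i$ together with the fact that each $f_i$ respects coactions. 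Thus $\prod^{\Gamma}_{i\in I}M_i$ may be taken to be $(\prod_i M_i,\psi)$, and the natural map~\ref{comparison map} is by construction the identity on underlying graded $A$-modules, hence an isomorphism. The main difficulty I anticipate is not conceptual but organizational: setting up the hierarchy of canonical comparison maps (for $\Gamma\otimes_A(-)$, for $\Gamma\otimes_A\Gamma\otimes_A(-)$, and for the identity functor), keeping straight which naturality squares are invoked in the coassociativity calculation, and being careful with the left and right $A$-module structures on $\Gamma$ that appear for a Hopf algebroid (rather than a Hopf algebra), so that Lemma~\ref{finite type lemma} applies on the nose. A more structural alternative, which trades the explicit coaction check for a diagram chase, would be to use that the category of connective graded $\Gamma$-comodules is comonadic over connective graded $A$-modules: present each $M_i$ as the equalizer of the two canonical maps $\Gamma\otimes_A M_i\to\Gamma\otimes_A\Gamma\otimes_A M_i$ between cofree comodules, commute this equalizer past the product (the cofree functor preserves products, after which Lemma~\ref{finite type lemma} identifies the resulting terms with $\Gamma\otimes_A\prod_i M_i$ and $\Gamma\otimes_A\prod_i(\Gamma\otimes_A M_i)$), and compare with the analogous equalizer computing $\prod_i M_i$ in the category of graded $A$-modules.
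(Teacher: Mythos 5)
Your proposal is correct, but it proves the lemma by a genuinely different route than the paper. You build the product in the category of connective graded $\Gamma$-comodules by hand: you transport the coactions along the comparison isomorphism $c\colon \Gamma\otimes_A\prod_i M_i \to \prod_i(\Gamma\otimes_A M_i)$ of Lemma~\ref{finite type lemma}, check counit and coassociativity using the comparison isomorphism one tensor factor higher (which applies because each $\Gamma\otimes_A M_i$ is again connective), and verify the universal property, so that the underlying graded $A$-module of $\prod^{\Gamma}_{i\in I}M_i$ is literally $\prod_{i\in I}M_i$. The paper instead argues categorically: it presents each $M_i$ as the kernel of the map $EG(M_i)\to EGEG(M_i)$ between extended (cofree) comodules, uses that products preserve kernels and that the right adjoint $E$ preserves products, and then invokes Lemma~\ref{finite type lemma} twice to identify $GE(\prod_i GM_i)$ with $\prod_i GEG(M_i)$ and $GE(\prod_i GEGM_i)$ with $\prod_i GEGEG(M_i)$; a comparison of kernels finishes the proof. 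So the ``more structural alternative'' you sketch at the end is essentially the paper's actual argument. What your main route buys is self-containedness: you do not need to presuppose (or separately establish) the existence of products in the comodule category, since you construct them, and the identification of the underlying module is tautological; the cost is the bookkeeping you anticipate---naturality of the comparison maps for $\Gamma\otimes_A(-)$ and $\Gamma\otimes_A\Gamma\otimes_A(-)$, the counit reduction, and (a point worth stating explicitly) the check that the projections $\mathrm{pr}_i$ are themselves comodule maps, which follows at once from $\mathrm{pr}_i\circ c = 1_\Gamma\otimes\mathrm{pr}_i$. The paper's route outsources all comodule-axiom verifications to the adjunction formalism at the price of the exactness input $0\to M_i\to EG(M_i)\to EGEG(M_i)$. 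Both arguments use exactly the same essential input, namely Lemma~\ref{finite type lemma} applied to connective modules, and both should silently mind the same left/right $A$-module subtlety for $\Gamma$ over a Hopf algebroid that you flag.
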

\begin{proof}
I am grateful to the anonymous referee for pointing out that this lemma follows from Lemma \ref{finite type lemma}, above, together with a general result about limits in the category of coalgebras over a comonad, which one can find as (the dual to) Proposition 4.3.2 in \cite{MR1313497}. I include a self-contained proof here as well, because the proof is short, direct, and (in my opinion) illuminating. Write $\gr_{\geq 0}\mathcal{C}$ for the connective graded objects in an abelian category $\mathcal{C}$.
Write $G: \gr_{\geq 0}\Comod(\Gamma)\rightarrow \gr_{\geq 0}\Mod(A)$ for the forgetful functor
and $E: \gr_{\geq 0}\Mod(A)\rightarrow\gr_{\geq 0}\Comod(\Gamma)$ for its right adjoint, the extended comodule functor given by $E(M) = \Gamma\otimes_A M$.

For each $i\in I$, we have the exact sequence
\[ 0 \rightarrow M_i \rightarrow EG(M_i)\stackrel{\delta^0}{\longrightarrow} EGEG(M_i)\]
of graded $\Gamma$-comodules, where $\delta^0$ is the difference of the two unit maps
arising from the adjunction $G\dashv E$. (This is well-known; it is the reason that the cobar
resolution of a comodule is indeed a resolution, as in Appendix~1 of~\cite{MR860042}. The reader who prefers
a self-contained, categorical argument may be satisfied with the observation that, for any adjunction 
$f \dashv g$, the cofork
\begin{equation}\label{}\xymatrix{
X \ar[r] & 
 gfX \ar@<1ex>[r]\ar@<-1ex>[r]
   & 
  gfgfX 
  }\end{equation}
splits after applying $f$; see section VI.6 of~\cite{MR1712872}. But in our setting, $f = G$, the left adjoint functor $f$ reflects isomorphisms,
so the canonical map $X \rightarrow \ker \delta^0$ being an isomorphism after applying $f$, due to the splitting of the cofork, 
implies that $$X \rightarrow \ker \delta^0$$ is already an isomorphism.)

Now the fact that products preserve kernels tells us that 
we have the commutative diagram with exact rows
\begin{equation}\label{diagram 116} \xymatrix{
  &
  &
 GE\left(\prod_{i\in I} GM_i\right) \ar[r]\ar[d]^{\cong} &
 GE\left(\prod_{i\in I} GEGM_i\right) \ar[d]^{\cong}  \\
0 \ar[r]\ar[d] &  
 G\left(\prod^{\Gamma}_{i\in I} M_i\right) \ar[r]\ar[d] &
 G\left(\prod^{\Gamma}_{i\in I} EGM_i\right) \ar[r]\ar[d]^{} &
 G\left(\prod^{\Gamma}_{i\in I} EGEGM_i\right) \ar[d]^{} \\
0 \ar[r] &
 \prod_{i\in I} G(M_i) \ar[r] &
 \prod_{i\in I} GEG(M_i) \ar[r] &
 \prod_{i\in I} GEGEG(M_i).
}\end{equation}
The maps indicated as isomorphisms are isomorphisms due to $E$ being a right adjoint, hence preserving products. The vertical composites
$GE\left(\prod_{i\in I} GM_i\right) \rightarrow  \prod_{i\in I} GEG(M_i)$
and 
$GE\left(\prod_{i\in I} GEGM_i\right) \rightarrow  \prod_{i\in I} GEGEG(M_i)$
are the maps
$\Gamma\otimes_A \prod_{i\in I} M_i \rightarrow  \prod_{i\in I} \Gamma\otimes_A M_i$
and 
$\Gamma\otimes_A \prod_{i\in I} \Gamma\otimes_A M_i \rightarrow  \prod_{i\in I} \Gamma\otimes_A\Gamma \otimes_A M_i$, respectively, 
of the type~\eqref{comparison map 10a}. Lemma~\ref{finite type lemma}
then implies that these maps are isomorphisms.
Consequently the map
$G\left(\prod^{\Gamma}_{i\in I} M_i\right) \rightarrow \prod_{i\in I} G(M_i)$
in diagram~\eqref{diagram 116}
is an isomorphism.
\end{proof}

We now give a sequence of lemmas which refer to generators, cogenerators, and compactness.
Recall that, given an abelian category $\mathcal{C}$, an object $M$ of $\mathcal{C}$ is said to be {\em compact} if the functor $\hom_{\gr_{\geq 0}\Comod(\Gamma)}(M,-): \gr_{\geq 0}\Comod(\Gamma) \rightarrow \Ab$ commutes with filtered colimits; and $M$ is said to be a {\em generator} if the functor $\hom_{\gr_{\geq 0}\Comod(\Gamma)}(M,-)$ is faithful. ``Cogenerator'' is defined dually to ``generator.''
\begin{lemma}\label{cogenerator lemma}
Let $(A,\Gamma)$ be a flat graded Hopf algebroid. Suppose that $A$ is connective.
Then the category of connective graded $\Gamma$-comodules is abelian and has an injective cogenerator.
\end{lemma}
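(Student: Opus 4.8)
The plan is to establish two things: that $\gr_{\geq 0}\Comod(\Gamma)$ is abelian, and that it has an injective cogenerator. For the abelian structure, the key point is that since $\Gamma$ is flat over $A$, the category of (all $\mathbb{Z}$-graded) $\Gamma$-comodules is abelian, with kernels and cokernels computed on underlying $A$-modules (this is the standard fact from Appendix~1 of~\cite{MR860042}). I would then observe that the full subcategory of connective graded comodules is closed under the formation of these kernels and cokernels inside $\gr\Comod(\Gamma)$: if $M, N$ are connective and $f\colon M \to N$ is a comodule map, then $\ker f$ is a graded $A$-submodule of $M$, hence connective, and $\operatorname{coker} f$ is a graded quotient of $N$, hence connective. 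Biproducts of connective comodules are connective. Hence $\gr_{\geq 0}\Comod(\Gamma)$ is an abelian subcategory, with the inclusion into $\gr\Comod(\Gamma)$ exact.

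For the injective cogenerator, the plan is to transport an injective cogenerator along the extended comodule functor $E(M) = \Gamma \otimes_A M$, which is right adjoint to the forgetful functor $G$ (restricted to connective objects, as set up in the proof of Lemma~\ref{products lemma}). The standard principle is that a right adjoint whose left adjoint is faithful and exact carries injective cogenerators to injective cogenerators. So first I would exhibit an injective cogenerator $J$ in $\gr_{\geq 0}\Mod(A)$: the category of connective graded $A$-modules is a Grothendieck category (it is a module category over the graded ring $A$ truncated appropriately, or more directly, it has exact filtered colimits and the generator $\coprod_{n\geq 0} \Sigma^n A$), so it has an injective cogenerator; concretely one can take a product of graded injective hulls of $\Sigma^n(A/\mathfrak{m})$-type modules, or just cite the existence of an injective cogenerator in any Grothendieck category. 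Then I would set $J' = E(J)$ and claim $J'$ is an injective cogenerator of $\gr_{\geq 0}\Comod(\Gamma)$. Injectivity of $J'$: for any injective $A$-module $J$ and any monomorphism $M \hookrightarrow N$ of connective comodules, $\operatorname{Hom}_{\Comod}(N, E(J)) \cong \operatorname{Hom}_A(GN, J) \twoheadrightarrow \operatorname{Hom}_A(GM, J) \cong \operatorname{Hom}_{\Comod}(M, E(J))$, where surjectivity uses injectivity of $J$ together with the fact that $G$ is exact (so $GM \hookrightarrow GN$ is still mono). Cogeneration: given a nonzero comodule map $f\colon M \to N$, its image is a nonzero connective comodule, so (choosing an underlying element detected by the cogenerator $J$) there is an $A$-map $GN \to J$ not killing $Gf$, which adjoints to a comodule map $N \to E(J)$ not killing $f$; equivalently, one checks $\operatorname{Hom}_{\Comod}(-, E(J))$ is faithful because $\operatorname{Hom}_A(G(-), J)$ is faithful and $G$ is faithful and exact.

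The main obstacle, and the step requiring the most care, is pinning down the existence of the injective cogenerator $J$ in $\gr_{\geq 0}\Mod(A)$ cleanly — one wants either to cite that this is a Grothendieck category (exactness of filtered colimits is immediate, and $\coprod_{n \geq 0}\Sigma^n A$ is a generator) and invoke the general theorem that Grothendieck categories have injective cogenerators, or to write down an explicit one. I would go with the former, since it is shortest and the Grothendieck property will anyway be needed for Theorem~\ref{main thm}. A secondary subtlety is making sure the truncation/connectivity does not interfere with the adjunction $G \dashv E$: one must check $E(M) = \Gamma \otimes_A M$ is connective when $M$ is, which holds precisely because $\Gamma$ is connective (here flatness is not even needed, only connectivity of $\Gamma$, matching the hypotheses of the lemma), and that the adjunction isomorphism $\operatorname{Hom}_{\gr_{\geq 0}\Comod(\Gamma)}(N, E(M)) \cong \operatorname{Hom}_{\gr_{\geq 0}\Mod(A)}(GN, M)$ is just the restriction of the usual one. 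None of these are hard, but they are the places where the hypothesis ``$A$ connective'' (and implicitly $\Gamma$ connective, as part of the Hopf algebroid being over a connective base in the intended application) is actually used.
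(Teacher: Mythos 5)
Your proposal follows essentially the same route as the paper: abelianness via closure of the connective subcategory under kernels, cokernels, and biproducts computed in $\gr\Comod(\Gamma)$, and the injective cogenerator obtained by applying the extended comodule functor $E$ to an injective cogenerator of the Grothendieck category $\gr_{\geq 0}\Mod(A)$ (generator $\coprod_{n\geq 0}\Sigma^n A$), with injectivity coming from the adjunction against the mono-preserving forgetful functor $G$ and cogeneration from faithfulness of $G$. Your explicit remark that connectivity of $\Gamma$ is what makes $E$ land in connective comodules is a point the paper's proof passes over silently, and is a reasonable refinement rather than a divergence.
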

\begin{proof}
Let $\gr_{\geq 0}\Comod(\Gamma)$ denote the category of connective graded $\Gamma$-comodules, let %\linebreak 
$\gr\Comod(\Gamma)$ denote the category of
graded $\Gamma$-comodules,
and let $\gr_{\geq 0}\Mod(A)$ denote the category of connective graded $A$-modules.
It is standard that $\gr\Comod(\Gamma)$ is abelian
as long as $\Gamma$ is flat over $A$; see Theorem~1.1.3 of~\cite{MR860042}, for example. 
Since $\gr_{\geq 0}\Comod(\Gamma)$ is a full additive subcategory of $\gr\Comod(\Gamma)$ which is closed under finite biproducts and kernels and cokernels computed in
$\gr\Comod(\Gamma)$, the category $\gr_{\geq 0}\Comod(\Gamma)$ is abelian as well; see Theorem~3.41 of~\cite{MR2050440}, for example.

Now let $E: \gr_{\geq 0}\Mod(A)\rightarrow \gr_{\geq 0}\Comod(\Gamma)$ be the extended comodule functor.
The idea here is to apply $E$ to a cogenerator in the category of graded $A$-modules, but if $A$ is not concentrated in a single grading degree, then a cogenerator for the category of graded $A$-modules will typically fail to be connective, so applying $\Gamma\otimes_A -$ to such a cogenerator does not yield a connective graded comodule.

Instead, we will apply $E$ to an injective cogenerator $I$ in the category $\gr_{\geq 0}\Mod(A)$ of {\em connective} graded $A$-modules---but we must show that $I$ exists.
Since kernels and colimits in $\gr_{\geq 0}\Mod(A)$ are computed
in the underlying category of graded $A$-modules, and since graded $A$-modules form an AB$5$ abelian category, the category $\gr_{\geq 0}\Mod(A)$ is also AB$5$.
The coproduct $\coprod_{n\geq 0} \Sigma^n A$ is a generator for $\gr_{\geq 0}\Mod(A)$,
so $\gr_{\geq 0}\Mod(A)$ is Grothendieck, so by Grothendieck's famous theorem in~\cite{MR0102537} (that every Grothendieck category has an injective cogenerator), $\gr_{\geq 0}\Mod(A)$ has an injective cogenerator. So $I$ exists.

Now the functor $E$ is right adjoint to the forgetful functor $G: \gr_{\geq 0}\Comod(\Gamma)\rightarrow\gr_{\geq 0}\Mod(A)$, and $G$ preserves monomorphisms since kernels of comodule maps are computed in the underlying module category; it is an elementary exercise to show that a functor sends injectives to injectives if it has a monomorphism-preserving left adjoint. So $E(I)$ is an injective object in connective graded $\Gamma$-comodules.
We claim that $E(I)$ is also a cogenerator. Let $f: X \rightarrow Y$ be a morphism in $\gr_{\geq 0}\Comod(\Gamma)$ whose induced map
\[ \hom_{\gr_{\geq 0}\Comod(\Gamma)}(Y, E(I)) \rightarrow \hom_{\gr_{\geq 0}\Comod(\Gamma)}(X, E(I))\] is zero. 
Then the adjunction $G\dashv E$ tells us that the map
\[\hom_{\gr_{\geq 0}\Mod(A)}(G(Y), I) \rightarrow \hom_{\gr_{\geq 0}\Mod(A)}(G(X), I)\]
is zero, and hence that $G(f): G(X) \rightarrow G(Y)$ is zero, since $I$ is a cogenerator in $\gr_{\geq 0}\Mod(A)$. 
Since $G$ is faithful and additive, this then tells us that $f=0$. So $E(I)$ is an injective cogenerator in $\gr_{\geq 0}\Comod(\Gamma)$.
\end{proof}

\begin{lemma}\label{E commutes with colimits}
Let $(A,\Gamma)$ be a connective graded flat Hopf algebroid. Then the extended comodule functor $E: \gr_{\geq 0}\Mod(A)\rightarrow \gr_{\geq 0}\Comod(\Gamma)$ commutes with all colimits.
\end{lemma}
\begin{proof}
Let $\mathcal{A}$ be a small category and let $H: \mathcal{A}\rightarrow \gr_{\geq 0}\Mod(A)$ be a functor. We continue to write $G$ for the forgetful functor $\gr_{\geq 0}\Comod(\Gamma)\rightarrow\gr_{\geq 0}\Mod(A)$ which is left adjoint to $E$. The composite $GE$ is $\Gamma\otimes_A -$, hence preserves colimits in $\gr_{\geq 0}\Mod(A)$. So the composite natural map
\begin{equation*} \colim_{d\in D} GEH(d)  \stackrel{\cong}{\longrightarrow} G\colim_{d\in D} EH(d)  \rightarrow GE\colim_{d\in D} H(d) \end{equation*}
is an isomorphism, so the comparison map $\colim_{d\in D} EH(d)  \rightarrow E\colim_{d\in D} H(d)$ is an isomorphism after applying $G$, hence is already an isomorphism since $G$ reflects isomorphisms.
\end{proof}

\begin{lemma}\label{functors preserving compact objects}
Given abelian categories $\mathcal{C},\mathcal{D}$, a compact object $M$ of $\mathcal{C}$, and a functor $F: \mathcal{C}\rightarrow \mathcal{D}$ with right adjoint $G$ such that $G$ preserves filtered colimits, the object $F(M)$ of $\mathcal{D}$ is compact.
\end{lemma}
\begin{proof}
Elementary exercise in applying adjunctions.
\begin{comment} (FINE, JUST MORE MORE DETAIL THAN NECESSARY)
Let $\mathcal{A}$ be a small filtered category and $H: \mathcal{A} \rightarrow \mathcal{D}$ a functor. We have natural isomorphisms
\begin{align*}
 \colim_{d\in\mathcal{D}} \hom_{\mathcal{D}}(F(M),H(d)) 
  &\cong \colim_{d\in\mathcal{D}} \hom_{\mathcal{C}}(M,G(H(d))) \\
  &\cong \hom_{\mathcal{C}}(M,\colim_{d\in\mathcal{D}} G(H(d))) \\
  &\cong \hom_{\mathcal{C}}\left(M,G\left(\colim_{d\in\mathcal{D}} H(d)\right)\right) \\
  &\cong \hom_{\mathcal{C}}\left(F(M),\colim_{d\in\mathcal{D}} H(d)\right).
\end{align*}
\end{comment}
\end{proof}

\begin{lemma}\label{compacts are fg}
Let $A$ be a graded ring. If a graded $A$-module $M$ is a compact object in the category $\gr_{\geq 0}\Mod(A)$ of connective graded $A$-modules, then $M$ is finitely generated.
\end{lemma}
\begin{proof}
A standard exercise: writing $\{ M_i\}_{i\in I}$ for the filtered collection (ordered by inclusion) of finitely generated graded sub-$A$-modules of $M$, we have that the
map
\begin{align*} \colim_{i\in I} \hom_{\gr_{\geq 0}\Mod(A)}(M, M_i) &\rightarrow \hom_{\gr_{\geq 0}\Mod(A)}(M,\colim_i M_i) \\ &\cong \hom_{\gr_{\geq 0}\Mod(A)}(M,M) \end{align*}
is an isomorphism, and consequently that the identity map on $M$ factors through some $M_i$, i.e., $M$ is a summand in a finitely generated graded $A$-module, so $M$ is itself a finitely generated graded $A$-module.
\end{proof}

\begin{theorem}\label{main thm}
Let $(A,\Gamma)$ be a connective finite-type graded flat Hopf algebroid.
Then the category of connective graded $\Gamma$-comodules
is a Grothendieck category with a projective generator. %\footnote{But since this projective generator is typically not compact, there is no reason to believe that $\gr_{\geq 0}\Comod(\Gamma)$ is equivalent to the category of modules over any ring, except for particularly degenerate choices of $(A,\Gamma)$.}
Consequently, 
the category of connective graded $\Gamma$-comodules
has enough projectives and enough injectives, and satisfies Grothendieck's axiom
AB$4^*$ (that is, infinite products exist and are exact).
\end{theorem}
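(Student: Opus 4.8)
The plan is to exhibit a \emph{left} adjoint $L$ to the forgetful functor $G\colon\gr_{\geq 0}\Comod(\Gamma)\rightarrow\gr_{\geq 0}\Mod(A)$. This is the exact mirror of the observation, used in Lemma~\ref{cogenerator lemma}, that $G$ has a monomorphism-preserving right adjoint $E$ and therefore carries injectives to injectives: since $G$ is exact it preserves epimorphisms, so a left adjoint $L$ carries projectives to projectives; and since $G$ is faithful, $L$ carries a generator to a generator (the functor $\hom_{\gr_{\geq 0}\Comod(\Gamma)}(L(P_0),-)\cong\hom_{\gr_{\geq 0}\Mod(A)}(P_0,G(-))$ is a composite of faithful functors). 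The coproduct $\bigoplus_{n\geq 0}\Sigma^n A$ is a projective generator of $\gr_{\geq 0}\Mod(A)$, so $L(\bigoplus_{n\geq 0}\Sigma^n A)$ is then a projective generator of $\gr_{\geq 0}\Comod(\Gamma)$; and any cocomplete abelian category with a projective generator $P$ has enough projectives, since the canonical map from a suitable coproduct of copies of $P$ onto any object is an epimorphism ($P$ being a generator) and coproducts of projectives are projective.

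To produce $L$ I would appeal to the Special Adjoint Functor Theorem (see section~V.8 of~\cite{MR1712872}): it suffices that $\gr_{\geq 0}\Comod(\Gamma)$ be complete, locally small, and well-powered with a cogenerating set, and that $G$ preserve all small limits. Well-poweredness and local smallness are routine, using that $G$ is faithful, so that subcomodules of a comodule $M$ are detected among the sub-$A$-modules of $G(M)$. Completeness holds because the category is abelian and has products: by Lemma~\ref{finite type lemma} the canonical map $\Gamma\otimes_A\prod_{i\in I}M_i\rightarrow\prod_{i\in I}(\Gamma\otimes_A M_i)$ is an isomorphism, so transporting the product of the comodule structure maps along it equips the underlying $A$-module product $\prod_{i\in I}M_i$ with a comodule structure, and one checks this is the product in $\gr_{\geq 0}\Comod(\Gamma)$ (this is also essentially the content of Lemma~\ref{products lemma}). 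A cogenerating set is the singleton consisting of the injective cogenerator constructed in Lemma~\ref{cogenerator lemma}. Finally $G$ preserves all small limits, being exact and, by Lemma~\ref{products lemma}, product-preserving. Hence $L$ exists and the projective generator is obtained.

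The remaining assertions are then quick. Colimits in $\gr_{\geq 0}\Comod(\Gamma)$ are computed on underlying graded $A$-modules (standard for comodules over a flat Hopf algebroid; see Appendix~1 of~\cite{MR860042}), and $\gr_{\geq 0}\Mod(A)$ is AB$5$ (as noted in the proof of Lemma~\ref{cogenerator lemma}), so $\gr_{\geq 0}\Comod(\Gamma)$ is AB$5$; together with the projective generator already produced it is Grothendieck, hence has enough injectives by Grothendieck's theorem in~\cite{MR0102537}. Axiom AB$4^*$ is immediate from Lemma~\ref{products lemma}: products in $\gr_{\geq 0}\Comod(\Gamma)$ agree with products of underlying graded $A$-modules, which are exact. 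The crux of the argument is the existence of $L$: in contrast to the concrete right adjoint $E=\Gamma\otimes_A-$, there is no obvious formula for $L$, and it is precisely the connective finite-type hypothesis that makes the argument go through, both by forcing $\gr_{\geq 0}\Comod(\Gamma)$ to be complete (so that the Special Adjoint Functor Theorem applies) and by keeping its products exact.
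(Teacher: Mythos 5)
Your proposal is correct and follows essentially the same route as the paper: the paper likewise applies Freyd's Special Adjoint Functor Theorem to the forgetful functor $G$ (using Lemma~\ref{products lemma} for preservation of products and Lemma~\ref{cogenerator lemma} for the cogenerator) to obtain a left adjoint, shows that the image of $\coprod_{n\geq 0}\Sigma^n A$ under it is a projective generator via the same adjunction-plus-faithfulness arguments, and deduces enough projectives, AB$5$, and AB$4^*$ exactly as you do. Your explicit verification of completeness, local smallness, and well-poweredness makes the SAFT hypotheses slightly more visible than in the paper's write-up, but the substance is identical.
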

\begin{proof}
By Lemma~\ref{cogenerator lemma}, $\gr_{\geq 0}\Comod(\Gamma)$ is abelian and has an injective cogenerator. (This would also be implied by $\gr_{\geq 0}\Comod(\Gamma)$ being a Grothendieck category, but at this point in this proof, we are still on our way to proving that $\gr_{\geq 0}\Comod(\Gamma)$ is Grothendieck.)
By Lemma~\ref{products lemma}, products in $\gr_{\geq 0}\Comod(\Gamma)$ are computed in $\gr_{\geq 0}\Mod(A)$,
hence products in $\gr_{\geq 0}\Comod(\Gamma)$ are exact, since the
category of graded modules over any ring is AB$4^*$.
So $\gr_{\geq 0}\Comod(\Gamma)$ satisfies axiom AB$4^*$. 
(In any Grothendieck category, having enough projectives implies that
the category satisfies Grothendieck's axiom AB$4^*$---see Corollary~1.4 of~\cite{MR2197371} for a proof---but the converse is
{\em not} true: see~\cite{MR2197371} for examples, due to Gabber and Roos,
of Grothendieck categories satisfying axiom AB$4^*$ but having no nonzero projectives at all!)

More precisely, Lemma~\ref{products lemma} shows that the forgetful functor $G: \gr_{\geq 0}\Comod(\Gamma)\rightarrow\gr_{\geq 0}\Mod(A)$ preserves products. In this paragraph and the next two, we show that $G$ preserving products is the key result which causes $\gr_{\geq 0}\Comod(\Gamma)$ to have enough projectives.
The functor $G$ is also easily seen to preserve
kernels (see e.g. Appendix 1 of~\cite{MR860042} for the usual construction of kernels in graded $\Gamma$-comodules; the salient point is that they are computed in the underlying category of graded $A$-modules), so $G$ preserves all limits. 
Now  $\gr_{\geq 0}\Comod(\Gamma)$ is certainly ``well-powered,'' that is, every connective graded $\Gamma$-comodule has only a set (not a proper class) of subcomodules; and 
by Lemma~\ref{cogenerator lemma}, $\gr_{\geq 0}\Comod(\Gamma)$ has a cogenerator.
So by Freyd's Special Adjoint Functor Theorem (standard; see e.g. Theorem~V.8.2 of~\cite{MR1712872}, or for a statement closer to our application here, section 3.M of~\cite{MR2050440}), $G$ has a left adjoint. 
Call this left adjoint $F$. Since $F$ has a right adjoint (namely, $G$) which preserves epimorphisms, $F$ sends projectives to projectives. 
So $F(\coprod_{n\geq 0}\Sigma^n A)$ is a projective object of $\gr_{\geq 0}\Comod(\Gamma)$, since $\coprod_{n\geq 0} \Sigma^n A$ is projective in $\gr_{\geq 0}\Mod(A)$.

We claim that $F(\coprod_{n\geq 0}\Sigma^n A)$ is also a generator in $\gr_{\geq 0}\Comod(\Gamma)$.
The proof is as follows: if $V$ is a generator of $\gr_{\geq 0}\Mod(A)$ and $f: X \rightarrow Y$ a map in $\gr_{\geq 0}\Comod(\Gamma)$
whose induced map \[ \hom_{\gr_{\geq 0}\Comod(\Gamma)}(FV, X) \rightarrow \hom_{\gr_{\geq 0}\Comod(\Gamma)}(FV, Y)\] is zero, then the adjunction $F\dashv G$
gives us that the induced map \[\hom_{\gr_{\geq 0}\Mod(A)}(V, GX) \rightarrow \hom_{\gr_{\geq 0}\Mod(A)}(V, GY)\] is zero and hence that
$Gf: GX \rightarrow GY$ is zero. Since $G$ is faithful and additive, $f = 0$. So $FV$ is a generator in $\gr_{\geq 0}\Comod(\Gamma)$.

Consequently $\gr_{\geq 0}\Comod(\Gamma)$ is a cocomplete abelian category with a projective generator. It is standard that this now implies that $\gr_{\geq 0}\Comod(\Gamma)$ has enough projectives: if $\mathcal{C}$ is a cocomplete abelian category with projective generator $P$, then for any object $X$ of $\mathcal{C}$, 
the object $\coprod_{f\in\hom_{\mathcal{C}}(P,X)}P$ is projective,
and the evaluation map $\coprod_{f\in\hom_{\mathcal{C}}(P,X)}P \rightarrow X$ is epic.

Since $\gr_{\geq 0}\Mod(A)$ satisfies Grothendieck's axiom AB$5$ (see the proof of Lemma~\ref{cogenerator lemma} for this), and since $G$ is faithful, additive, has both a left and a right adjoint and hence is exact and preserves all colimits, 
$\gr_{\geq 0}\Comod(\Gamma)$ also satisfies Grothendieck's axiom AB$5$. 
So $\gr_{\geq 0}\Comod(\Gamma)$ satisfies AB$5$ and has a generator, hence
$\gr_{\geq 0}\Comod(\Gamma)$ is Grothendieck.
\end{proof}
There is a classical ``recognition principle'' for the category of modules over a ring (see Corollary~V.1 of~\cite{MR0232821}): an abelian category is equivalent to the category of modules over a ring if and only if that abelian category is cocomplete and has a {\em compact} projective generator. Theorem \ref{main thm} tells us that the category $\gr_{\geq 0}\Comod(\Gamma)$ of connective comodules over a connective finite-type graded flat Hopf algebroid is co-complete and has a projective generator. This seems, at a glance, like it is awfully close to saying that $\gr_{\geq 0}\Comod(\Gamma)$ is equivalent to the category of modules over a ring. However, the projective generator for $\gr_{\geq 0}\Comod(\Gamma)$ that we construct in the proof of Theorem \ref{main thm} is infinitely generated, hence far from being compact. One might ask if it is possible to find a smaller projective generator, one which {\em is} compact. We now give the simple argument for why, except in trivial cases, this is impossible:
\begin{prop}\label{gr-comod isnt a module category}
Let $(A,\Gamma)$ be as in Theorem \ref{main thm}. If $A$ is not the zero ring, then $\gr_{\geq 0}\Comod(\Gamma)$ is {\em not} equivalent to the category of modules over a ring. 
\end{prop}
\begin{proof}
Suppose that $M$ is a compact generator for $\gr_{\geq 0}\Comod(\Gamma)$. 
If we assume that the underlying graded $A$-module of $M$ admits a set of homogeneous generators concentrated in finitely many grading degrees, then we get a contradiction as follows: let $S$ denote a minimal set of homogeneous generators for the underlying $A$-module of $M$, and let $n$ be an upper bound for the grading degrees of the elements of $S$.
Every map of graded $\Gamma$-comodules $M \rightarrow \Sigma^{n+1} A$ must send all $A$-module generators of $M$ to zero, so the functor $\hom_{\gr_{\geq 0}\Comod(\Gamma)}(M,-)$ fails to distinguish between the zero map $\Sigma^{n+1} A \rightarrow \Sigma^{n+1} A$ and the identity map on $\Sigma^{n+1} A$, contradicting faithfulness of $\hom_{\gr_{\geq 0}\Comod(\Gamma)}(M,-)$. (The previous sentence is where we have used the assumption $A\neq 0$.) 

So, if we choose a set of homogeneous generators $\{ m_i\}_{i\in I}$ for the underlying $A$-module of $M$, there must be elements $m_i$ in arbitrarily high grading degrees. In particular, the underlying $A$-module of $M$ is not finitely generated, consequently not compact by Lemma~\ref{compacts are fg}. But applying Lemmas~\ref{E commutes with colimits}, \ref{functors preserving compact objects}, and \ref{compacts are fg}, $G(M)$ is a finitely generated graded $A$-module, a contradiction. So $M$ must not exist. 
\end{proof}
\begin{corollary}\label{main cor}
The categories of connective graded comodules over the Hopf algebroids
$(MU_*,MU_*MU)$, $(BP_*,BP_*BP)$, and $((H\mathbb{F}_p)_*, (H\mathbb{F}_p)_*H\mathbb{F}_p)$ all have enough projectives. None of these categories is equivalent to the category of modules over a ring.
\end{corollary}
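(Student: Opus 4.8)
The plan is to verify that each of the three Hopf algebroids listed satisfies the three hypotheses of Theorem~\ref{main thm} --- connective, finite-type, and flat --- and then simply invoke that theorem, which yields a projective generator and hence enough projectives. So this corollary is a direct application; the only content is bookkeeping with the explicit presentations.

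First I would recall the presentations given in the Example preceding this corollary: $MU_* \cong \mathbb{Z}[x_1,x_2,\dots]$ and $MU_*MU \cong MU_*[b_1,b_2,\dots]$ with $\left|x_n\right| = \left|b_n\right| = 2n$; $BP_* \cong \mathbb{Z}_{(p)}[v_1,v_2,\dots]$ and $BP_*BP \cong BP_*[t_1,t_2,\dots]$ with $\left|v_n\right| = \left|t_n\right| = 2(p^n-1)$; and $(H\mathbb{F}_p)_* \cong \mathbb{F}_p$, with $(H\mathbb{F}_p)_*H\mathbb{F}_p$ a polynomial algebra over $\mathbb{F}_p$ on the $\xi_n$ when $p=2$, and a polynomial algebra on the $\xi_n$ tensored over $\mathbb{F}_p$ with the exterior algebra on the $\tau_n$ when $p$ is odd, all generators in positive degree. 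In each case $\Gamma$ is a polynomial algebra, or a polynomial algebra tensored over the base with an exterior algebra, on countably many generators over $A$. Hence $\Gamma$ is a \emph{free} graded $A$-module, with a homogeneous basis given by the relevant monomials; freeness immediately gives that $\Gamma$ is flat over $A$, so $(A,\Gamma)$ is a flat graded Hopf algebroid.

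Next I would check connectivity and finite-type. Since all the polynomial and exterior generators of $\Gamma$ over $A$ lie in strictly positive grading degrees, the same is true of $A$ itself, so $A$ and $\Gamma$ are concentrated in nonnegative degrees, i.e., $(A,\Gamma)$ is connective. For finite-type: because every generator of $\Gamma$ over $A$ has positive degree, in each fixed grading degree there are only finitely many monomials in those generators, so the free $A$-basis of $\Gamma$ has only finitely many elements in each degree. Thus $\Gamma$ is finite-type and free as a graded $A$-module in the sense of Definition~\ref{def of finite-type} --- one may take $F_0 = \Gamma$ and $F_1 = 0$ in the defining exact sequence --- and in particular $\Gamma$ is finite-type as an $A$-module, so $(A,\Gamma)$ is a finite-type graded Hopf algebroid.

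Having established that $(MU_*,MU_*MU)$, $(BP_*,BP_*BP)$, and $((H\mathbb{F}_p)_*, (H\mathbb{F}_p)_*H\mathbb{F}_p)$ are each connective, finite-type, and flat graded Hopf algebroids, I would apply Theorem~\ref{main thm} to each of them, concluding that the category of connective graded comodules over each is a Grothendieck category with a projective generator, and hence has enough projectives. I do not expect any real obstacle here: the sole point that requires a moment's attention is the finite-type condition, and that reduces to the elementary observation that a polynomial algebra (or a polynomial algebra tensored with an exterior algebra) on generators concentrated in positive degrees has only finitely many monomials in each grading degree.
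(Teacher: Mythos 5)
Your proposal is correct and follows essentially the same route as the paper: the Example preceding the corollary records exactly these presentations to conclude that each of the three Hopf algebroids is flat, connective, and finite-type, and the corollary is then an immediate application of Theorem~\ref{main thm}. Your added observation that positive-degree generators force only finitely many monomials in each degree is precisely the (implicit) justification of the finite-type condition in the paper.
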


Perhaps the statement of Proposition \ref{gr-comod isnt a module category} sounds a bit strange: after all, if $\Gamma=A$, then the category of graded $\Gamma$-comodules is simply the category of graded $A$-modules. The reason that Proposition \ref{gr-comod isnt a module category} works is that the category of connective {\em graded} modules over a connective graded ring $A$ is not equivalent to the category of {\em ungraded} modules over a ring. So what makes Proposition \ref{gr-comod isnt a module category} work is not really specific to comodules at all: it is essentially the same phenomenon which is responsible for {\em graded} module categories being inequivalent to {\em ungraded} module categories. 

Consequently, we ought to show that $\gr_{\geq 0}\Comod(\Gamma)$ is not equivalent to the category of connective {\em graded} modules over a graded ring. This takes a bit more work. The purpose of the next section is to do this in the simplest and most classical nontrivial case, when $\Gamma$ is a graded commutative Hopf algebra over a field. (This is of course a very commonly-occurring case: it occurs when $\Gamma$ is the dual Steenrod algebra at any prime, for example.)
 
\section{The case of a Hopf algebra over a field.}

Throughout this section, we restrict our attention to the situation where the Hopf algebroid $(A,\Gamma)$ is a connective graded commutative Hopf algebra over a field. Consequently $A$ will be a field in this section, and to reinforce this running assumption, we change notation slightly, and write $k$ in place of $A$.

\subsection{Calculation of the $k$-vector space underlying a generator for the category of connective graded $\Gamma$-comodules.}

Recall from Theorem \ref{main thm} that we constructed a left adjoint $F: \gr_{\geq 0}\Mod(k)\rightarrow\gr_{\geq 0}\Comod(\Gamma)$ to the forgetful functor $G: \gr_{\geq 0}\Comod(\Gamma)\rightarrow\gr_{\geq 0}\Mod(k)$. The effect of $F$ on suspensions $\Sigma^n k$ of the ground ring was especially important in the rest of the proof of Theorem \ref{main thm}, since we showed that $\coprod_{n\geq 0} F(\Sigma^n k)$ is a projective generator for the category $\gr_{\geq 0}\Comod(\Gamma)$.
Our first task in this section is to give a more concrete identification of the $\Gamma$-comodule $F\Sigma^n k$. 
Proposition \ref{identification of F} identifies the underlying graded $k$-vector space of $F\Sigma^n k$:
\begin{prop}\label{identification of F}
Let $\Gamma$ be a connective finite-type graded commutative Hopf algebra over a field $k$. Let $n$ be a nonnegative integer, and let $m$ be an integer. Then the $k$-linear dual vector space $\left( (F\Sigma^n k)^m\right)^*$ of the degree $m$ summand $(F\Sigma^n k)^m$ of $F\Sigma^n k$ is isomorphic to the degree $n-m$ summand $\Gamma^{n-m}$ of $\Gamma$. 
That is, $\left( (F\Sigma^n k)^m\right)^* \cong \Gamma^{n-m}$ as $k$-vector spaces.
\end{prop}
\begin{proof}
Using the adjunctions $F\dashv G \dashv E$, we have the isomorphisms of $k$-vector spaces
\begin{align*}
 \hom_{\gr_{\geq 0}\Mod(k)}\left( GF\Sigma^n k, \Sigma^m Gk \right) 
  &\cong \hom_{\gr_{\geq 0}\Comod(k)}\left( F\Sigma^n k, E\Sigma^m Gk \right) \\
  &\cong \hom_{\gr_{\geq 0}\Mod(k)}\left( \Sigma^n k, GE\Sigma^m Gk \right) \\
  &\cong \hom_{\gr_{\geq 0}\Mod(k)}\left( \Sigma^n k, \Sigma^m GEGk \right) \\
  &\cong \hom_{\gr_{\geq 0}\Mod(k)}\left( \Sigma^n k, \Sigma^m G\Gamma \right) \\
  &\cong \Gamma^{n-m}.
\end{align*}
\end{proof}
\begin{corollary}\label{cor on nonstability of F}
Let $\Gamma,n$ be as in Proposition \ref{identification of F}.
Then the projective connective graded $\Gamma$-comodule $F\Sigma^n k$ is trivial in degrees $>n$.

If we furthermore assume that $\Gamma$ is not concentrated in a single grading degree, then there exist positive integers $n$ such that $F\Sigma^n k$ fails to be isomorphic to $\Sigma^n Fk$. That is, the free functor $F: \gr_{\geq 0}\Mod(k)\rightarrow \gr_{\geq 0}\Comod(\Gamma)$ fails to commute with suspension.
\end{corollary}

\begin{example}
Suppose that $\Gamma$ is the mod $2$ dual Steenrod algebra, and $k = \mathbb{F}_2$. Then Proposition \ref{identification of F} gives us that $F\Sigma^0k \cong k$ as a $k$-vector space, and hence also as a $k$-comodule. That is, $F\Sigma^0k\cong \mathbb{F}_2$.
Meanwhile, as a graded $k$-vector space, $F\Sigma^1k$ is isomorphic to $k$ in degree $0$, isomorphic to $k$ in degree $1$, and trivial in all other degrees. So $F\Sigma k$ fails to be isomorphic to $\Sigma Fk$.
\end{example}
\begin{remark}
I hope the reader will forgive me for offering this warning about an easy way to make mistakes when reasoning about the free functor $F$ and the projective connective graded $\Gamma$-comodules $F\Sigma^n k$. 
We adopt the following convenient notation: if $M$ and $N$ are connective graded $k$-vector spaces, we write $\underline{\hom}_{\gr_{\geq 0}\Mod(k)}(M,N)$ for the graded hom-group whose homogeneous degree $n$ summand is trivial for $n<0$, and if $n\geq 0$, it is the set of homomorphisms $M\rightarrow N$ which increase degree by $n$, i.e., $\underline{\hom}_{\mathcal{C}}(M,N)^n = \hom_{\mathcal{C}}(\Sigma^n M, N)$. This is the natural choice of self-enrichment (in the sense of \cite{MR2177301}) of the category of connective graded $k$-vector spaces, so that we have the isomorphism \[\underline{\hom}_{\gr_{\geq 0}\Mod(k)}(M\otimes_k N,Q) = \underline{\hom}_{\gr_{\geq 0}\Mod(k)}\left(M, \underline{\hom}_{\gr_{\geq 0}\Mod(k)}(N,Q)\right)\]
of connective graded $k$-modules for all connective graded $k$-vector spaces $M,N,Q$.

The forgetful functor $G: \gr_{\geq 0}\Comod(\Gamma)\rightarrow \gr_{\geq 0}\Mod(k)$ and the extended comodule functor $E: \gr_{\geq 0}\Mod(k) \rightarrow \gr_{\geq 0}\Comod(\Gamma)$ each commute with suspension. That is, $G\circ \Sigma^n \simeq \Sigma^n \circ G$ and $E\circ \Sigma^n \simeq \Sigma^n \circ E$ for all nonnegative integers $n$. 
It is easy to imagine that the isomorphism of $k$-vector spaces
\begin{align*}
 \hom_{\gr_{\geq 0}\Mod(k)}\left( GFM,N\right)
  &\cong \hom_{\gr_{\geq 0}\Mod(k)}\left( M,GEN\right),
\end{align*}
which we have for all connective graded $k$-vector modules $M$ and $N$, ought to imply the existence of an isomorphism
\begin{align}
\label{iso 230942} \underline{\hom}_{\gr_{\geq 0}\Mod(k)}\left( GFM,N\right)
  &\cong \underline{\hom}_{\gr_{\geq 0}\Mod(k)}\left( M,GEN\right)
\end{align}
of graded $k$-vector spaces. However, there is no such adjunction \eqref{iso 230942}! If we had the isomorphism \eqref{iso 230942} for all connective graded $k$-modules $M$ and $N$, then in the case $N = \Sigma^m k$, we would have the chain of isomorphism of $k$-vector spaces
\begin{align*}
 \hom_{\gr_{\geq 0}\Mod(k)}\left( \Sigma^j GFM,\Sigma^m k\right) 
  &\cong \underline{\hom}_{\gr_{\geq 0}\Mod(k)}\left( GFM,\Sigma^m k\right)^j \\
  &\cong \underline{\hom}_{\gr_{\geq 0}\Mod(k)}\left( M,GE\Sigma^m k\right)^j \\
  &\cong \hom_{\gr_{\geq 0}\Mod(k)}\left( \Sigma^j M,\Sigma^m GEk\right) \\
  &\cong \hom_{\gr_{\geq 0}\Mod(k)}\left( \Sigma^j M,GE\Sigma^m k\right) \\
  &\cong \hom_{\gr_{\geq 0}\Mod(k)}\left( GF\Sigma^j M,\Sigma^m k\right).
\end{align*}
That is, the natural map $GF\Sigma^j M \rightarrow \Sigma^j GFM$ of graded $k$-vector spaces induces an isomorphism on $k$-linear duals in each grading degree. Hence $GF\Sigma^j M \rightarrow \Sigma^j GFM$ is an isomorphism. Since $G$ reflects isomorphisms and commutes with suspension, $F\Sigma^j M \rightarrow \Sigma^j FM$ is an isomorphism, contradicting the failure of $F$ to commute with suspension, demonstrated in Corollary \ref{cor on nonstability of F}. Hence we cannot have an adjunction of the form \eqref{iso 230942}.
\end{remark}

\medskip 

Another corollary of Proposition \ref{identification of F} is an identification of the $k$-vector space underlying the generator $\coprod_n F\Sigma^n k$ for the category of connective graded $\Gamma$-comodules:
\begin{corollary}\label{generator identification 1}
Let $\Gamma,n$ be as in Proposition \ref{identification of F}. Recall that we have the projective generator $F\left( \coprod_n \Sigma^n k\right)$ for the category $\gr_{\geq 0}\Comod(\Gamma)$ of connective graded $\Gamma$-comodules constructed in Theorem \ref{main thm}. Then the degree $m$ summand of $F\left( \coprod_n \Sigma^n k\right)$ has $k$-linear dual vector space isomorphic to the $k$-vector space product $\prod_{n\geq 0} \Gamma^{n-m}$.
\end{corollary}

\subsection{Failure of the category of connective graded comodules to be equivalent to the category of connective graded modules over a ring.}

Corollary \ref{generator identification 1} identified the generator $\coprod_n F\Sigma^n k$ for the category of connective graded $\Gamma$-comodules, but only as a $k$-vector space. In order to prove our main result in this section, Theorem \ref{main nonequivalence thm}, we will need slightly more information about the structure of $\coprod_n F\Sigma^n k$ as a $\Gamma$-comodule.

The needed information will be expressed in terms of the covariant embedding of $\Gamma$-comodules into $\Gamma^*$-modules. This construction is classical: I do not know its historically earliest appearance in the literature, but see \cite{MR686116} for a discussion from a topological perspective, or \cite{MR2012570} for a discussion from a purely algebraic perspective. The construction goes as follows: given a graded $\Gamma$-comodule $M$ with coaction map $\psi: M \rightarrow \Gamma\otimes_k M$, the action map $M\times \Gamma^* \rightarrow M$ sends a pair $(m,f)\in M\times\Gamma^*$ to image of $m$ under the composite
\begin{equation}\label{composite 035494} M\stackrel{\psi}{\longrightarrow} \Gamma\otimes_k M \stackrel{f\otimes M}{\longrightarrow} k\otimes_k M \stackrel{\cong}{\longrightarrow} M. \end{equation}
This action of $\Gamma^*$ on $M$ is called the {\em adjoint action}. 
This construction yields a covariant, exact, faithful, full functor $\Cov: \gr\Comod(\Gamma)\rightarrow \gr\Mod(\Gamma^*)$ which admits a right adjoint $\Rat: \gr\Mod(\Gamma^*)\rightarrow\gr\Comod(\Gamma)$. Given a graded $\Gamma^*$-module, the graded $\Gamma^*$-module $\Cov(\Rat(M))$ is called the {\em rational submodule of $M$,} and it is indeed a graded $\Gamma^*$-submodule of $M$, via the counit map $\Cov(\Rat(M))\rightarrow M$ of the adjunction $\Cov\dashv\Rat$.
See section 4 of \cite{MR2012570} for a presentation of these well-known results. These results rely on $\Gamma$ being projective as a $k$-module, which of course is automatic from our assumption that $k$ is a field. 

To avoid potential confusion, it is important to fix our grading convention for the $k$-linear dual of a graded $k$-vector space. Given a graded $k$-vector space $V$, we grade its $k$-linear dual $V^*$ as follows: the degree $n$ summand of $V^*$ is the $k$-linear dual of the degree $-n$ summand of $V^{-n}$. That is, $(V^*)^n = (V^{-n})^*$. Consequently, if $\Gamma$ is a connective graded Hopf algebra over $k$, then its $k$-linear dual $\Gamma^*$ is a {\em co-connective} graded Hopf algebra over $k$. With this convention, the covariant embedding $\Cov: \gr\Comod(\Gamma)\rightarrow\gr\Mod(\Gamma^*)$ {\em preserves} the gradings.

\begin{prop}\label{identification of F 2}
Let $\Gamma$ be a connective finite-type graded commutative Hopf algebra over a field $k$. Let $n$ be a nonnegative integer. Let $I_n$ denote the two-sided ideal of the co-connective dual Hopf algebra $\Gamma^*$ generated by all homogeneous elements of degree $<-n$. Then, for every connective graded $\Gamma$-comodule $M$, we have an isomorphism of $k$-vector spaces
\begin{equation}
\label{iso 99080} \hom_{\gr_{\geq 0}\Mod(\Gamma^*)}\left( \Cov F\Sigma^n k, \Cov M\right)
  \cong  \hom_{\gr_{\geq 0}\Mod(\Gamma^*)}\left( \Sigma^n \Gamma^*/I_n, \Cov M\right),
\end{equation}
natural in the variable $M$.
\end{prop}
\begin{proof}
For each connective graded $\Gamma$-comodule $M$, we have a chain of adjunction isomorphisms
\begin{align}
\label{eq 99081} \hom_{\gr_{\geq 0}\Mod(\Gamma^*)}\left( \Cov F\Sigma^n k, \Cov M\right) 
          &\cong \hom_{\gr_{\geq 0}\Comod(\Gamma)}\left( F\Sigma^n k, M\right) \\
\nonumber &\cong \hom_{\gr_{\geq 0}\Mod(k)}\left( \Sigma^n k, GM\right) \\
\nonumber &\cong M^n \\
\nonumber &\cong \hom_{\gr\Mod(\Gamma^*)}\left( \Sigma^n \Gamma^*, \Cov M\right) \\
\label{eq 99082}  &\cong \hom_{\gr_{\geq 0}\Mod(\Gamma^*)}\left( \Sigma^n \Gamma^*/I_n, \Cov M\right).
\end{align}
The isomorphism of \eqref{eq 99081} with \eqref{eq 99082} is induced by the natural map 
\begin{equation*}%\label{map 4302949}
\Sigma^n\Gamma^*/I_n \rightarrow \Cov F\Sigma^nk\end{equation*} which picks out the copy of $k$ in degree $n$ of $F\Sigma^n k$, so the chain of isomorphisms from \eqref{eq 99081} to \eqref{eq 99082} yields naturality of \eqref{iso 99080} in the variable $M$.
\end{proof}

\begin{theorem}\label{main nonequivalence thm} 
Let $\Gamma$ be a connected\footnote{We emphasize that here we assume {\em connectedness}, not only connectivity. That is, not only is $\Gamma$ trivial in negative degrees: it is also assumed that $\Gamma$ is isomorphic to $k$ in degree zero.} finite-type graded commutative Hopf algebra over a field $k$. %Suppose that the augmentation ideal of $\Gamma^*$ is not finitely generated.
Suppose that $\Gamma$ is nontrivial in infinitely many degrees.
Then the category $\gr_{\geq 0}\Comod(\Gamma)$ of connective graded $\Gamma$-comodules is {\em not} equivalent to the category of connective graded modules over a graded ring by a suspension-preserving equivalence of categories.
\end{theorem}
\begin{proof}
We argue by contrapositive. The category of connective graded modules over a graded ring does not have a compact projective generator, but what it {\em does} have is a compact projective object $C$ such that the coproduct $\coprod_{n\geq 0}\Sigma^n C$ is a generator. (Namely, $C = R$, as a free $R$-module generated in degree zero.)
So suppose that $\gr_{\geq 0}\Comod(\Gamma)$ has a compact projective object $C$ such that the coproduct $\coprod_{n\geq 0}\Sigma^n C$ is a generator for $\gr_{\geq 0}\Comod(\Gamma)$. In the proof of Theorem \ref{main thm}, we showed that $\coprod_{n\geq 0} F\Sigma^n k$ is a generator for $\gr_{\geq 0}\Comod(\Gamma)$. Consequently there exists an epimorphism $\epsilon: \left( \coprod_{n\geq 0}F\Sigma^n k\right)^{\oplus \kappa} \rightarrow C$ in $\gr_{\geq 0}\Comod(\Gamma)$ for some cardinal number $\kappa$. Since $C$ is projective, the epimorphism $\epsilon$ splits. Choose a section $\sigma: C \rightarrow \left( \coprod_{n\geq 0}F\Sigma^n k\right)^{\oplus \kappa}$ of $\epsilon$ in the category $\gr_{\geq 0}\Comod(\Gamma)$.

We claim that the image of $\sigma$ contains nonzero elements of at most finitely many of the summands $F\Sigma^n k$ of $\left( \coprod_{n\geq 0}F\Sigma^n k\right)^{\oplus \kappa}$. This is easily seen: since $C$ is compact, the natural morphism
\begin{align}\label{nat map 34094309}
 \coprod_{n\geq 0} \coprod_{\kappa} \hom_{\gr_{\geq 0}\Comod(\Gamma)}\left( C,F\Sigma^n k\right) \rightarrow 
 \hom_{\gr_{\geq 0}\Comod(\Gamma)}\left( C,\left( \coprod_{n\geq 0}F\Sigma^n k\right)^{\oplus \kappa}\right)
\end{align}
is an isomorphism, and elements of the direct sum in the domain of \eqref{nat map 34094309} are zero except in finitely many of the summands. 

Since $\im\sigma$ nontrivially intersects only finitely many of the summands $F\Sigma^n k$, there exists some largest integer $N$ such that $\im\sigma$ nontrivially intersects summands of the form $F\Sigma^Nk$. Consequently the integer $N$ and the connective graded $\Gamma$-comodule $C$ have the following properties:
\begin{enumerate}
\item $C$ is a coproduct of retracts of copies of $F\Sigma^n k$ for $n\leq N$.
\item For every connective graded $\Gamma$-comodule $M$ and every homogeneous element $m\in M$, there exists a coproduct $\tilde{C}$ of suspensions of copies of $C$ and a graded $\Gamma$-comodule morphism $\tilde{C}\rightarrow M$ whose image contains $m$.
\end{enumerate}
As a consequence of 1 and 2, we have the following:
\begin{enumerate}
\item[(3)] For every connective graded $\Gamma$-comodule $M$ and every homogeneous element $m\in M$, there exists a coproduct $\tilde{C}$ of suspensions of copies of $F\Sigma^n k$, for various integers $n\leq N$, and a graded $\Gamma$-comodule morphism $\tilde{C}\rightarrow M$ whose image contains $m$.
\end{enumerate}
Bringing Proposition \ref{identification of F 2} to bear now yields:
\begin{enumerate}
\item[(4)] For every connective graded $\Gamma$-comodule $M$ and every homogeneous element $m\in M$, there exists a coproduct $\overline{C}$ of suspensions of copies of $\Sigma^n \Gamma^*/I_n$, for various integers $n\leq N$, and a graded $\Gamma^*$-module morphism $\overline{C}\rightarrow \Cov(M)$ whose image contains $m$,
\end{enumerate}
and consequently,
\begin{enumerate}
\item[(5)] For every connective graded $\Gamma$-comodule $M$ and every homogeneous element $m\in M$, the element $m\in M$ is annihilated by the ideal $I_N$ of $\Gamma^*$
\end{enumerate}
%Finally, we have:
%\begin{enumerate}
%\item[(6)] For every connective graded $\Gamma$-comodule $M$, the graded $\Gamma^*$-module $\Cov(M)$ is $I_N$-torsion.
%\end{enumerate}

Recall our assumption that $\Gamma$ is nontrivial in infinitely many degrees. %Since $\Gamma^*$ is also finite type, this means that a minimal set of homogeneous generators for the augmentation ideal $I_0$ of $\Gamma^*$ must contain elements of arbitrarily small negative degree. (Recall that, with our grading conventions, $\Gamma^*$ is concentrated in nonpositive degrees.)  
Consequently there must be some nonzero homogeneous element of $I_0$ which is in a degree $d$ with $d<-N$. Choose such an element in $\Gamma^*$ in degree $d$, and call it $\gamma^*$. Choose also a homogeneous element $\gamma\in \Gamma$ in degree $-d$ such that $\gamma^*$, when evaluated on $\gamma$, is equal to $1\in k$.
By counitality and connectedness of $\Gamma$, we have
\[ \Delta(\gamma) = \gamma\otimes 1 + 1\otimes \gamma\mod J_0\otimes J_0,\]
where $J_0$ is the augmentation ideal in $\Gamma$.%, and $J_{-d}$ is the ideal in $\Gamma$ generated by all homogeneous elements of degree $>-d$. 
(To avoid possible confusion, we remind the reader that %$d$ is negative, so $-d$ is positive; and 
the notation $I_0$ already is reserved for the augmentation ideal in $\Gamma^*$.) 
Then, following the recipe for the adjoint action of $\Gamma^*$ on $\Gamma$ described above in \eqref{composite 035494}, we have that $\gamma^*\cdot \gamma = 1$. 
Consequently we have an element $\gamma$ in the graded $\Gamma^*$-module $\Cov(\Gamma)$ such that $\gamma$ is not $\gamma^*$-torsion. But since $\gamma^*$ is in degree $d<-N$, $\gamma^*$ is in the ideal $I_N$ of $\Gamma^*$ generated by all elements of grading degree $<-N$. Hence $\Cov(\Gamma)$ contains a homogeneous element which is not $I_N$-torsion. This contradicts claim (5) above, whose truth we already established. So $C$ must not exist.
\end{proof}

\begin{corollary}\label{steenrod nonequivalence cor} 
Let $p$ be a prime number. Then the category of connective graded comodules over the mod $p$ dual Steenrod algebra is not equivalent, via a suspension-preserving functor, to the category of connective graded modules over any ring.
\end{corollary}

\begin{comment} (NOT CURRENTLY USED)
Now suppose that $N$ is a connective graded $\Gamma$-comodule. Choose a 
projective connective graded $A$-module $P$ and an epimorphism
of graded $A$-modules
$f: P \rightarrow GN$, where $GN$ is the underlying graded $A$-module
of $N$.
The adjoint map $f^{\sharp}: FP \rightarrow N$ has the property that
\[\xymatrix{
 P \ar[r]^f\ar[d]^{\eta P} &  GN \\
 GFP \ar[ur]^{Gf^{\sharp}} & 
}\]
commutes, where $\eta P$ is the unit map of the adjunction.
Since $f$ is an epimorphism, 
so is $Gf^{\sharp}$. Every map of $\Gamma$-comodules which is surjective (on the underlying $A$-modules) is also an epimorphism in $\Gamma$-comodules,
so $f^{\sharp}$ is an epimorphism. So $f^{\sharp}: FP \rightarrow N$
is an epimorphism from a projective object to $N$.
So $\mathcal{C}$ has enough projectives.
\end{comment}

\bibliography{/home/asalch/texmf/tex/salch}{}
\bibliographystyle{plain}
\end{document}